\documentclass{amsart}
\usepackage{graphicx} % Required for inserting images

\usepackage[utf8]{inputenc}
\usepackage[T1]{fontenc}

\usepackage{amssymb}
\usepackage{enumerate}
\usepackage{color}
\usepackage{tikz}
\usetikzlibrary{positioning,arrows,shapes,shadows}
\usepackage{caption}
\usepackage{subcaption}
\usepackage{cancel}

\theoremstyle{plain} 
\newtheorem{theorem}{Theorem}
\newtheorem{lemma}[theorem]{Lemma} 
 
\newtheorem{proposition}[theorem]{Proposition}

\newtheorem{example}[theorem]{Example}
\theoremstyle{remark}
\newtheorem*{remark}{Remark}
\theoremstyle{definition}
\newtheorem{definition}[theorem]{Definition}

\newcommand{\K}{\mathcal{K}}
\newcommand{\mF}{\mathcal F}
\newcommand{\F}{\mF}
\newcommand{\N}{\mathbb{N}}

\newcommand{\on}{\operatorname}
\title{On the equicontinuity of the Hutchinson operator}
\author{Filip Strobin and Piotr Szuca}

\begin{document}

\begin{abstract}We prove that the Hutchinson operators of a wide class of Iterated Function Systems are equicontinuous. In particular, every Iterated Function System with an attractor has equicontinuous Hutchinson operator.
\end{abstract}

\maketitle

\section{Introduction}
One of the problems undertaken in the theory of dynamical systems is its equicontinuity. 
Namely, if $f$ is a selfmap of a metric space (thus $(X,f)$ is a dynamical system), then $f$ is said to be pointwise or uniformly equicontinuous, 
if the family of all iterations $\{f^{(n)}:n\in\N\}$ is pointwise or uniformly equicontinuous (see Def.~\ref{def:eq} and~\ref{def:eq-selfmap} below). 
In the literature there can be found deep discussion on the problem and various sufficient and/or necessary conditions for equicontinuity of $f$---cf. ~\cite{MR4241503}, \cite{MR1182256}
for one-dimensional dynamical systems, or \cite{MR1990578} for a general setting 
(in Section \ref{sec:1} we give a short exposition of them).

The aim of the paper is to study the problem 
for special dynamical systems---Hutchinson operators $H_\F$ generated by iterated function systems $\F$.
A classical Hutchinson-Barnsley theorem states that finite IFSs that consists of Banach contractions admit attractors, which are contractive fixed points of Hutchinson operators. For such IFSs it is easy to see (cf. Example \ref{ex:111}) that $H_\F$ are uniformly equicontinuous. 
However, there are highly noncontractive IFSs with attractors, hence a question is to determine if IFSs with attractors are equicontinuous 
(note that there are maps with contractive fixed points which are not equicontinuous, 
see Example~\ref{example:com}). 
Some positive answers were done under some additional assumptions on the maps the IFS consists of
(cf.~\cite{MR3860435}, \cite{MR2251715} or \cite{Sarizadeh}). The main result of this paper gives a full positive answer. In fact, we formulate it in a bit wider frame---for maps generated by continuous multifunctions with good attracting properties.

In Section 2 we give basic definitions and several known facts on equicontinuity. We recall basics of the IFS theory of fractals and present some examples which show that the question on equicontinuity of Hutchinson operators is not completely trivial. Finally, we state the announced result on equicontinuity of Hutchinson operators of IFSs with attractors (Theorem~\ref{thm:main}).

In Section 3 we recall the definition of a weakly Picard operator and present a generalization of a Hutchinson operator involving a multifunction in the place of an IFS. 
We formulate Theorem \ref{thm:cont-eqq} which is the main result of the paper.
It deals with certain Hutchinson operators defined via multifunctions which are weakly Picard in a strong sense. 
As an immediate corollary we obtain above-mentioned Theorem~\ref{thm:main}.

%%%%%%%%%%%%%%%%%%%%%%%%%%%%%%%%%%%%%%%%%%%%%%%%%%

\section{Preliminaries}\label{sec:1}
Throughout this section we assume that $(X,d_X)$ and $(Y,d_Y)$ are metric spaces.

\subsection{Pointwise and uniform equicontinuity} 

\begin{definition}\label{def:eq}
Let $\F=\{f_i:i\in I\}$ be a family of maps $X\to Y$.\\
We say that $\F$ is \emph{pointwise equicontinuous}, if
\begin{equation}\label{eq:aaaaqq}
\forall_{x\in X}\;\forall_{\varepsilon>0}\;\exists_{\delta>0}\;\forall_{i\in I}\;\forall_{y\in X}\;d_X(x,y)<\delta\;\Rightarrow\;d_Y(f_i(x),f_i(y))<\varepsilon.
\end{equation}
We say that $\F$ is \emph{uniformly equicontinuous}, 
if
\begin{equation}\label{eq:qq}
\forall_{\varepsilon>0}\;\exists_{\delta>0}\;\forall_{i\in I}\;\forall_{x,y\in X}\;d_X(x,y)<\delta\;\Rightarrow\;d_Y(f_i(x),f_i(y))<\varepsilon.
\end{equation}
\end{definition}

The following lemma is a folklore.

\begin{lemma}\label{le:eqcompact}
Uniform equicontinuity of $\F$ implies pointwise equicontinuity of $\F$.
Moreover, these notions are equivalent provided that $X$ is compact.
\end{lemma}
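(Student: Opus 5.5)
The first implication is immediate from the quantifier structure. Given $x\in X$ and $\varepsilon>0$, uniform equicontinuity \eqref{eq:qq} provides a $\delta>0$ that works simultaneously for all $i\in I$ and all pairs $x,y$ with $d_X(x,y)<\delta$. In particular it works for pairs with the first point fixed at the given $x$. So the same $\delta$ witnesses \eqref{eq:aaaaqq}. In other words, \eqref{eq:qq} is obtained from \eqref{eq:aaaaqq} by moving $\forall_{x}$ past $\exists_\delta$, and that only strengthens the statement.

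For the converse under compactness of $X$, I would mimic the standard Heine--Cantor argument with a Lebesgue-number style covering, done uniformly over the family. Fix $\varepsilon>0$. For each $x\in X$, pointwise equicontinuity applied with $\varepsilon/2$ gives $\delta_x>0$ such that $d_X(x,y)<\delta_x$ implies $d_Y(f_i(x),f_i(y))<\varepsilon/2$ for all $i\in I$. The open balls $B(x,\delta_x/2)$ cover $X$. By compactness, finitely many of them, $B(x_1,\delta_{x_1}/2),\dots,B(x_n,\delta_{x_n}/2)$, already cover $X$. Put $\delta=\min_k \delta_{x_k}/2>0$.

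Now take $x,y\in X$ with $d_X(x,y)<\delta$. Choose $k$ with $x\in B(x_k,\delta_{x_k}/2)$. Then $d_X(x_k,y)\le d_X(x_k,x)+d_X(x,y)<\delta_{x_k}/2+\delta_{x_k}/2=\delta_{x_k}$. Hence both $x$ and $y$ lie within $\delta_{x_k}$ of $x_k$, and for every $i\in I$ the triangle inequality in $Y$ gives
\[
d_Y(f_i(x),f_i(y))\le d_Y(f_i(x),f_i(x_k))+d_Y(f_i(x_k),f_i(y))<\varepsilon .
\]
This $\delta$ does not depend on $i$, $x$ or $y$, which is exactly \eqref{eq:qq}.

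The only point needing care is the choice of radii $\delta_x/2$ together with $\delta=\min_k\delta_{x_k}/2$. This choice ensures that $y$ falls into the larger ball $B(x_k,\delta_{x_k})$, where the pointwise estimate at the center applies. Since the pointwise $\delta_x$ is already uniform in $i$, no additional difficulty arises from working with a family rather than a single map.
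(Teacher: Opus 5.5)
Your proof is correct. The paper gives no proof of this lemma and calls it folklore; your quantifier swap for the first part and the Heine--Cantor/Lebesgue-number covering for the compact case are the standard argument, and the same half-radius covering trick appears in the paper's proof of the Proposition on $\mF^\infty$. Your choice $\delta=\min_k\delta_{x_k}/2$ is the right one. The paper's analogous step there sets $\delta:=\min\{\delta_1,\ldots,\delta_k\}$ without the factor $1/2$, which is needed to conclude $y'\in B(x_i,\delta_{x_i})$.
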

Below we give some natural examples of equicontinuous maps.
We use the notation
$$\F^{\infty}:=\{f_{i_1}\circ\cdots\circ f_{i_n}:n\in\N,\;i_1,\ldots,i_n\in I\},$$
if $\F=\{f_i:i\in I\}$ consists of selfmaps of $X$.

\begin{example}\label{ex:111}$\;$\\
(1) If $\F=\{f_i:i\in I\}$ consists of Lipschitz maps 
with $\sup\{\on{Lip}(f_i):i\in I\}<\infty$, then $\F$ is uniformly equicontinuous.\\
(2) If $\F=\{f_i:i\in I\}$ consists of nonexpansive selfmaps of a metric space $X$, 
i.e., $\on{Lip}(f_i)\leq 1$ for all $i\in I$, 
then also the family of all finite compositions $\F^{\infty}$ 
consists of nonexpansive maps and hence $\F^{\infty}$ 
is also uniformly equicontinuous. 
\end{example}

For our study we will need the following lemma, which shows that equicontinuity on compact sets implies pointwise equicontinuity. 
Let us say that a family $\F=\{f_i\colon i\in I\}$ is
 \emph{uniformly equicontinuous on a set $K\subset X$}, if the family of restrictions
$\{f_{i\restriction K}\colon i\in I\}$ is uniformly equicontinuous, 
i.e.
\begin{equation}\label{eq:ueqY}
\forall_{\varepsilon>0}\;\exists_{\delta>0}\;\forall_{i\in I}\;\forall_{x,y\in K}\;d_X(x,y)<\delta\;\Rightarrow\;d_Y(f_i(x),f_i(y))<\varepsilon.
\end{equation}
Similarly we understand \emph{pointwise equicontinuity of $\F$ on a set $K$}.

\begin{lemma}\label{lem:newkey}
Let $\mathcal{F}=\{f_i:i\in I\}$.
The following conditions are equivalent:
\begin{enumerate}
    \item $\F$ is pointwise equicontinuous on each compact set;\label{lem:newkey:1a}
    \item $\F$ is uniformly equicontinuous on each compact set;\label{lem:newkey:1b}
    \item $\F$ is pointwise equicontinuous.\label{lem:newkey:2}
\end{enumerate}
\end{lemma}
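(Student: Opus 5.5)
We prove the cycle of implications (\ref{lem:newkey:1a})$\Rightarrow$(\ref{lem:newkey:1b})$\Rightarrow$(\ref{lem:newkey:2})$\Rightarrow$(\ref{lem:newkey:1a}).

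\emph{Step (\ref{lem:newkey:1a})$\Rightarrow$(\ref{lem:newkey:1b}).} This is the second part of Lemma~\ref{le:eqcompact}, applied to the family of restrictions $\{f_{i\restriction K}\colon i\in I\}$ on the compact metric space $K$.

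\emph{Step (\ref{lem:newkey:2})$\Rightarrow$(\ref{lem:newkey:1a}).} This is immediate. If $\delta$ works for the point $x\in K$ and for all $y\in X$, then it works in particular for all $y\in K$.

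\emph{Step (\ref{lem:newkey:1b})$\Rightarrow$(\ref{lem:newkey:2}).} This is the substantive step, and it is the main obstacle, since the definition of pointwise equicontinuity quantifies over all $y\in X$, not only over $y$ in some fixed compact set. We argue by contradiction.

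Suppose $\F$ is not pointwise equicontinuous at some $x\in X$. Then there is $\varepsilon>0$ such that for every $n$ there are $i_n\in I$ and $y_n\in X$ with
\[
d_X(x,y_n)<1/n \quad\text{and}\quad d_Y(f_{i_n}(x),f_{i_n}(y_n))\ge\varepsilon .
\]
Consider the set $K:=\{x\}\cup\{y_n:n\in\N\}$. Since $y_n\to x$, $K$ is compact: any open cover has a member containing $x$, which then contains all but finitely many $y_n$. By (\ref{lem:newkey:1b}) there is $\delta>0$ that works for $\varepsilon$ uniformly over all pairs of points of $K$ and all $i\in I$. Choose $n$ with $1/n<\delta$. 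The pair $x,y_n\in K$ and the index $i_n$ then give $d_Y(f_{i_n}(x),f_{i_n}(y_n))<\varepsilon$, which contradicts the choice of $y_n$. Hence $\F$ is pointwise equicontinuous.

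The key point is the choice of the compact set $K$ as a convergent sequence together with its limit. With it, the local uniform control on compact sets is enough to recover control over all nearby points $y\in X$.
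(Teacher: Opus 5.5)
Your proof is correct and essentially the same as the paper's. The paper also argues by contradiction using the compact set $\{x_0\}\cup\{x_k\colon k\in\N\}$, a convergent sequence together with its limit. The only difference is cosmetic: the paper runs this argument directly from (1), needing only pointwise equicontinuity at $x_0$ on that set, and dismisses the other implications via Lemma~\ref{le:eqcompact}, whereas you route it through (2).
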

\begin{proof}
We show only the implication ``(\ref{lem:newkey:1a})$\Rightarrow$(\ref{lem:newkey:2})''. Other implications are obvious (see Lemma~\ref{le:eqcompact}).

Suppose that $\mathcal{F}$ is not pointwise equicontinus. Then there exists $x_0\in X$ and $\varepsilon_0$ such that
$$
\forall_{\delta>0}\;\exists_{x\in X}\;\exists_{i\in I}\;(d_X(x_0,x)<\delta\;\mbox{and}\;d_Y(f_i(x_0),f_i(x))\geq\varepsilon_0.
$$
From this we have
\begin{equation}\label{eq:newkey}
\forall_{k\in\N}\;\exists_{x_k\in X}\;\exists_{i_k\in I}\;(d_X(x_0,x_k)<\frac{1}{k}\;\mbox{and}\;d_Y(f_{i_k}(x_0),f_{i_k}(x_k))\geq \varepsilon_0.
\end{equation}
Now the set $K:=\{x_0\}\cup\{x_k:k\in\N\}$ is compact. 
Hence we can find $\delta>0$ such that for $y\in K$ with $d_X(x_0,y)<\delta$, 
for any $i\in I$, we have $d_Y(f_i(x_0),f_i(y))<\varepsilon_0$. 
In particular, if $k\in\N$ is such that $\frac{1}{k}<\delta$, 
then $d_Y(f_{i_k}(x_0),f_{i_k}(x_k))<\varepsilon_0$, 
which contradicts with (\ref{eq:newkey}).
\end{proof}

%%%%%%%%%%%%%%%%%%%%%%%%%%%%%%%%%%%%%%%%%%%%%%%

\subsection{Equicontinuity of a selfmap}

\begin{definition}\label{def:eq-selfmap}
We say that $f:X\to X$ is \emph{pointwise} [\emph{uniformly}]\emph{ equicontinuous}, if the family of iterations $\{f^{(n)}:n\in\N\}$ is pointwise [uniformly, respectively] equicontinuous.
\end{definition}

A problem of (pointwise or uniform) equicontinuity of a map has been studied in literature. In particular, several necessary, sufficient or equivalent conditions has been detected for certain spaces. 
For example, for a compact space $X$, $f$ is uniformly equicontinuous iff $f$ is nonexpansive w.r.t. some admissible metric on $X$ (see \cite{Levitan}). We will also recall a recent theorem which gathers together and extends several earlier observations for a quite general class of compact spaces. Note that a finite tree is certain dendrite (necessarily compact), and that each closed and bounded interval is an example of such a space (for definition, we refer the reader to \cite{MR4241503}).  We denote by $Fix(f)$ the set of fixed points of $f$ and $Per(f)$ is the set of periodic points of $f$; the closure $\overline{A}$ is considered w.r.t. the Tychonoff (or pointwise) topology in the compact product space $X^X$.
\begin{theorem} [\hbox{\cite[Thm.~1.8]{MR4241503}}]\label{thm:bc}
If $X$ is a finite tree and $f:X\to X$, 
then the following conditions are equivalent:
\begin{enumerate}
\item $f$ is uniformly (equivalently, pointwise) equicontinuous;
\item there is an $n\in\mathbb{N}$ such that restriction of $f^{(n)}$ to 
  $\bigcap_{n\in\mathbb{N}}f^{(n)}(X)$ is the identity map;
\item there is an $n\in\mathbb{N}$ such that 
 $\textrm{Fix}(f^{(n)})=\bigcap_{n\in\mathbb{N}}f^{(n)}(X)$;
\item $\textrm{Per}(f)=\bigcap_{n\in\mathbb{N}}f^{(n)}(X)$;
\item there is no arc $A\subset X$ such that for some $n\in\mathbb{N}$, we have $A\subsetneq f^{(n)}(A)$;
\item for every $n\in\mathbb{N}$, the set $\textrm{Fix}(f^{(n)})$ is connected;
\item the set $\textrm{Per}(f)$ is connected;\label{thm:bc:per}
\item every $f'\in\bigcap_{n=1}^\infty \overline{\{f^{(m)}:m\geq n\}}$ is continuous;
\item there exists $f'\in\bigcap_{n=1}^\infty \overline{\{f^{(m)}:m\geq n\}}$ which is continuous.
%\item {\color{red}(Hypothesis)} $\forall_{f'\in\mathcal{S}^\star}\forall_{x\in X}f'(x)\in\textrm{Per}(f)$?
\end{enumerate}
\end{theorem}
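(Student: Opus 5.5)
This theorem is quoted from \cite[Thm.~1.8]{MR4241503}, so within the paper it needs no proof. If I were to prove it, I would organize the argument around the \emph{core} $X_\infty:=\bigcap_{n\in\mathbb{N}}f^{(n)}(X)$. Since $X$ is a compact tree, $X_\infty$ is a nonempty subcontinuum, hence a subtree, and $f(X_\infty)=X_\infty$.

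The backbone is the cycle (1)$\Rightarrow$(5)$\Rightarrow$(2)$\Rightarrow$(1), with the remaining conditions attached afterwards.

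\textbf{(1)$\Rightarrow$(5).} Suppose an arc satisfies $A\subsetneq f^{(n)}(A)$. Then the iterates $f^{(kn)}(A)$ form an increasing sequence of subtrees. Uniform equicontinuity forces every iterate of a point to move by an amount controlled uniformly in $k$. Combined with compactness of the tree and the expansion of $A$, this should contradict equicontinuity. For example, one can find points of $A$ arbitrarily close to each other whose images under some $f^{(kn)}$ stay a fixed distance apart, coming from the endpoint of $A$ that gets pushed outward.

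\textbf{(5)$\Rightarrow$(2).} This is the main obstacle. I would first show that $f|_{X_\infty}$ is a surjection of a tree with no expanding arcs. For such a map I would argue that $f$ permutes the finitely many branch and end points of $X_\infty$ in a suitable sense. Hence some power $f^{(n)}$ fixes them all and maps each edge onto itself. An edge map that is surjective onto itself and fixes the endpoints is either the identity or contains a subarc that is strictly expanded. The no-expansion hypothesis therefore gives $f^{(n)}=\mathrm{id}$ on $X_\infty$.

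\textbf{(2)$\Rightarrow$(1).} Here I would use the retraction-like behaviour: the iterates $f^{(m)}$ converge, up to period $n$, uniformly to maps onto $X_\infty$. That gives uniform equicontinuity, since only finitely many iterates need separate control.

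\textbf{Remaining conditions.} The equivalences (2)$\Leftrightarrow$(3)$\Leftrightarrow$(4) are bookkeeping with periodic points inside $X_\infty$. Conditions (6) and (7) go through the connectedness of fixed-point sets in trees: a disconnected $\mathrm{Fix}(f^{(n)})$ gives an arc between two fixed points, which yields an expanding subarc. Conditions (8) and (9) go through the Ellis-semigroup characterization of equicontinuity on compact spaces.
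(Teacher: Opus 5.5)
The paper gives no proof of this theorem; it is quoted from \cite{MR4241503}. So the question is only whether your outline closes, and at its centre it does not.

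\textbf{The key step (5)$\Rightarrow$(2) is assumed, not proved.} It rests on the claim that $f$ ``permutes the branch and end points of $X_\infty$ in a suitable sense''. But $f|_{X_\infty}$ is a priori only a surjection. Nothing you say rules out two vertices being collapsed, or a vertex being sent into the interior of an edge, and excluding this is exactly the content of the implication. One way to supply it:
\begin{enumerate}
\item By (5) and your edge lemma, if $x,y\in\operatorname{Fix}(f^{(m)})$ then $f^{(m)}([x,y])=[x,y]$ and $f^{(m)}$ is the identity on $[x,y]$. So $P=\operatorname{Fix}(f^{(N)})$ is a subtree with $f(P)\subseteq P$, nonempty because tree maps have fixed points.
\item Suppose $P\neq X_\infty$, and let $e$ be an endpoint of $X_\infty$ outside $P$. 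Choose $x$ with $f(x)=e$; then $x\notin P$. Take an endpoint $e'$ with $x\in[q',e']$, where $q,q'\in P$ are the attaching points of the components of $X_\infty\setminus P$ containing $e,e'$. Then $f([q',e'])\supseteq[q,e]$, so every such $e$ has a predecessor $e'$.
\item A cycle of this finite relation gives $f^{(k)}([q,e_0])\supseteq[q,e_0]$, hence equality by (5). Then $f^{(k)}(q)\in P\cap[q,e_0]=\{q\}$. Applying (5) once more forces $f^{(k)}(e_0)=e_0$: a preimage $x\neq e_0$ of $e_0$ would give $[q,x]\subsetneq f^{(k)}([q,x])$.
\item So $\operatorname{Fix}(f^{(kN)})\supseteq P\cup\{e_0\}$ captures a new endpoint, and induction on the number of endpoints gives (2).
\end{enumerate}

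\textbf{Your mechanism for (1)$\Rightarrow$(5) does not fit trees.} Both endpoints of $A$ may be fixed while $f^{(n)}(A)$ grows into a side branch at an interior branch point, so there is no endpoint ``pushed outward''. Use instead that $A\subseteq f^{(kn)}(A)$ for all $k$, so $A\subseteq X_\infty$. On $X_\infty$ the equicontinuous surjection $f^{(n)}$ has $\mathrm{id}$ as a uniform limit of iterates $f^{(k_jn)}$ with $k_j\to\infty$. Since the sets $f^{(kn)}(A)$ increase, $f^{(n)}(A)\subseteq\lim_j f^{(k_jn)}(A)=A$, a contradiction.

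\textbf{Conditions (8) and (9).} No general Ellis-semigroup characterization yields (8)$\Rightarrow$(1) or (9)$\Rightarrow$(1). In the paper's own Example~\ref{example:com} on the compact circle, $f^{(n)}$ converges pointwise to the constant map $1$. Hence $\bigcap_n\overline{\{f^{(m)}:m\geq n\}}$ consists of a single continuous map, although $f$ is not equicontinuous. Only (1)$\Rightarrow$(8)$\Rightarrow$(9) holds in general; the way back must use the tree structure.

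\textbf{Conditions (6) and (7).} You sketch only (5)$\Rightarrow$(6)$\Rightarrow$(7). The substantive converse is not addressed: that connectedness of $\operatorname{Per}(f)$ forces $\operatorname{Per}(f)=X_\infty$ with bounded periods.

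\textbf{(2)$\Rightarrow$(1).} This also fails in general compact spaces. In an annulus, $X_\infty$ can be a boundary circle on which $f=\mathrm{id}$ while every other orbit winds around it infinitely often. So the uniform convergence of $f^{(kn)}$ you invoke must be derived from the fact that $X\setminus X_\infty$ has finitely many components, each attached to $X_\infty$ at a single point.
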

We refer the reader to \cite{MR4241503} 
for a brief exposition of the earlier studies of the topic.

\subsection{Iterated function systems, their Hutchinson operators and attractors}

For a metric space $(X,d)$, by $\K(X)$ we denote the hyperspace of all nonempty and compact subsets of $X$ endowed with the Hausdorff metric $h$.

\begin{definition}
By an \emph{iterated function system} (IFS) we mean a family $\mF=\{f_i:i\in I\}$ of continuous selfmaps of $X$ so that for every $K\in\K(X)$, the set $\overline{\bigcup_{i\in I}f_i(K)}\in\K(X)$. Then the map $H_\mF:\K(X)\to\K(X)$ defined by
$$
H_\mF(K):=\overline{\bigcup_{i\in I}f_i(K)}
$$
is called \emph{the Hutchinson operator of $\mF$}.
\end{definition}

\begin{definition}
Let $\mF$ be an IFS. A (necessarily unique) set $A_\mF\in\K(X)$ that satisfies the following conditions:
\begin{itemize}
\item[(i)] $A_\mF=H_\mF(A_\mF)=\overline{\bigcup_{i\in I}f_i(A_\mF)}$;
\item[(ii)] for every $K\in\K(X)$, the sequence of iterations $(H_\mF^{(n)}(K))$ converges to $A_\mF$ w.r.t. the Hausdorff metric $h$,
\end{itemize}
is called as \emph{an attractor of $\mF$}.
\end{definition}

In other words, an attractor is a contracting fixed point of the Hutchinson operator, where we say that (cf.~Def.~\ref{def:WPO}):
\begin{definition}\label{def:CFP}
$x_*$ is \emph{a contracting fixed point of $f\colon X\to X$}, if  for every $x\in X$, the limit $\lim_{n\to\infty}f^{(n)}(x)=x_*$.
\end{definition}

The following well-known Hutchinson theorem (see \cite{Hut})
states that finite contractive IFSs on complete metric spaces admits attractors.

\begin{theorem}
Let $\mF=\{f_1,\ldots,f_k\}$ be an IFS on a complete metric space $X$ consisting of Banach contraction (that is, their Lipschitz constants $\on{Lip}(f_i)<1$). Then $\mF$ admits an attractor. 
\end{theorem}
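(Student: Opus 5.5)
The plan is the classical Banach fixed point argument on the hyperspace $(\K(X),h)$. Put $L:=\max\{\on{Lip}(f_1),\ldots,\on{Lip}(f_k)\}<1$. Three things need checking, in this order: $\mF$ really is an IFS, i.e.\ $H_\mF$ maps $\K(X)$ into $\K(X)$; $H_\mF$ is a Banach contraction with constant $L$; and $(\K(X),h)$ is complete. Then the Banach fixed point theorem yields a unique fixed point $A_\mF$ of $H_\mF$ with $H_\mF^{(n)}(K)\to A_\mF$ for every $K\in\K(X)$. These are exactly conditions (i) and (ii) in the definition of an attractor.

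The first step is immediate. Since the family is finite, $\bigcup_{i=1}^k f_i(K)$ is a finite union of continuous images of a compact set. It is therefore compact and nonempty, so the closure in the definition of $H_\mF$ changes nothing.

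For the contraction estimate, I first record two elementary facts about the Hausdorff metric. If $f$ is $L$-Lipschitz, then $h(f(A),f(B))\le L\,h(A,B)$: each point $f(a)$ is within $L\,d(a,b)$ of $f(b)$, where $b\in B$ is chosen close to $a$. Also, $h\bigl(\bigcup_i A_i,\bigcup_i B_i\bigr)\le\max_i h(A_i,B_i)$, which holds because every point of $A_i$ lies near some point of $B_i\subset\bigcup_j B_j$. Combining the two gives $h(H_\mF(A),H_\mF(B))\le L\,h(A,B)$.

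The main obstacle is completeness of $(\K(X),h)$ when $X$ is complete. This is a standard fact, and I would simply cite it. If a proof is wanted, take a Cauchy sequence $(K_n)$ and set $K:=\bigcap_{n}\overline{\bigcup_{m\ge n}K_m}$. First, $K$ is compact and nonempty: the set $\overline{\bigcup_{m\ge n}K_m}$ is totally bounded, since it lies within $\varepsilon$ of $K_n\cup\cdots\cup K_N$ for suitable $N$, and it is closed in a complete space. Second, $h(K_n,K)\to0$: passing to a subsequence with $h(K_{n_j},K_{n_{j+1}})<2^{-j}$, every point of $K_{n_j}$ starts a Cauchy chain $x_j\in K_{n_j}, x_{j+1}\in K_{n_{j+1}},\ldots$ whose limit lies in $K$ and is close to $x_j$. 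The reverse inclusion estimate comes directly from the definition of $K$.

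Alternatively, one can avoid proving completeness of the whole hyperspace. For a fixed $K$, the iterates $H_\mF^{(n)}(K)$ form a Cauchy sequence by the contraction estimate, and one can show directly that they lie in a common compact set. However, citing completeness of $\K(X)$ is the cleanest route.
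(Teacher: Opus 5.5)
Your proposal is correct and follows the paper's route: you show $\on{Lip}(H_\mF)\le\max_i\on{Lip}(f_i)<1$ on $(\K(X),h)$ and apply the Banach fixed point theorem, and its contracting fixed point is the attractor. You also spell out that $H_\mF$ is well defined and that $(\K(X),h)$ is complete, which the paper's sketch takes for granted.
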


This result can be proven with a help of a Banach fixed point theorem. Namely, it can be easily shown that
$$\on{Lip}(H_\mF)\leq\max\{\on{Lip}(f_i)\colon i=1,\ldots,k\}<1,$$ 
so $H_{\mF}$ is a Banach contraction, and hence it has a contracting fixed point which is exactly an attractor $A_\mF$. Hutchinson's result can be extended in many ways, 
for example for possibly infinite IFSs, 
or IFSs that consists of maps satisfying weaker contractive conditions. 
However, there are IFSs with attractors which are highly noncontractive, equivalently---which are not equicontinuous theirselves:

\begin{example}[\hbox{\cite[Example 2]{Lesniak2024}}]\label{example:1}
\emph{
Let $\F=\{f_1,f_2,f_3\}$  be an IFS on $[0,1]$, where
$$
f_1(x):=\sqrt{x},\;\;\;
f_2(x):=\sqrt{1-x},\;\;\;
f_3(x):=\left\{\begin{array}{ccc}
  \frac{1}{16}&\mbox{for}&x<\frac{1}{8};\\
  \frac{7}{2}x-\frac{3}{8}&\mbox{for}&\frac{1}{8}\leq x\leq\frac{1}{4};\\
  \frac{1}{2}&\mbox{for}&x<\frac{1}{8}.
\end{array}\right.
$$
Then $[0,1]$ is the attractor of $\mF$, but maps $f_1,f_2,f_3$ are highly non-nonexpansive, 
i.e.~for any  admissible metric $d$ on $[0,1]$, we have $\on{Lip}(f_i)>1$ for $i=1,2,3$. 
As mentioned earlier, this is equivalent to saying that $f_1,f_2$ and $f_3$ are not uniformly equicontinuous.
}
\end{example}

For more examples of nonexpansive IFSs we refer the reader to~\cite{Lesniak2024}.

The paper is devoted to the question on equicontinuity of Hutchinson operators of IFSs. 
Our motivations came from the question on Hutchinson operators of IFSs with attractors.
Theorem \ref{thm:cont-eqq} (formulated in a more abstract form) implies that Hutchinson operators of IFSs with attractors on metric spaces are pointwise equicontinuous,
i.e., we get the following result.

\begin{theorem}\label{thm:main}
If an IFS $\mF$ on a metric space admits an attractor, then its Hutchinson operator $H_\mF$ is pointwise equicontinuous.
\end{theorem}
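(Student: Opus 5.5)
By Lemma~\ref{lem:newkey}, applied to the family $\{H_\mF^{(n)}:n\in\N\}$ of selfmaps of $\K(X)$, it suffices to prove pointwise equicontinuity directly, and I would argue by contradiction with sequences. So suppose there are $K_0\in\K(X)$, $\varepsilon>0$, sets $L_k\in\K(X)$ with $h(L_k,K_0)\to0$, and integers $n_k$ with
$$h\bigl(H_\mF^{(n_k)}(L_k),H_\mF^{(n_k)}(K_0)\bigr)\ge\varepsilon .$$
If $(n_k)$ has a bounded subsequence, then passing to a constant subsequence $n_k=n$ contradicts continuity of the single map $H_\mF^{(n)}$. Continuity of $H_\mF$ on $(\K(X),h)$ is standard, since the $f_i$ are continuous and $H_\mF$ maps compacta to compacta. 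Hence we may assume $n_k\to\infty$. Since $H_\mF^{(n)}(K_0)\to A_\mF$, it then suffices to show $h\bigl(H_\mF^{(n_k)}(L_k),A_\mF\bigr)\to0$, and I would split this into the two halves of the Hausdorff distance.

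\emph{Upper half.} The set $C:=K_0\cup\bigcup_k L_k$ is compact, because a convergent sequence of compacta together with its limit has compact union. The operator $H_\mF$ is monotone with respect to inclusion, so $H_\mF^{(n)}(L_k)\subset H_\mF^{(n)}(C)$ for all $n,k$. Since $H_\mF^{(n)}(C)\to A_\mF$, every point of $H_\mF^{(n_k)}(L_k)$ lies within $o(1)$ of $A_\mF$ as $k\to\infty$. This half is easy.

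\emph{Lower half (main obstacle).} We must show that $A_\mF$ lies in the $\varepsilon$-neighbourhood of $H_\mF^{(n_k)}(L_k)$ for large $k$. By monotonicity it is enough to find points $x_k\in L_k$ such that the singleton orbits $H_\mF^{(n_k)}(\{x_k\})$ fill $A_\mF$ up to $\varepsilon$. Choose $x\in K_0$ and $x_k\in L_k$ with $x_k\to x$. Since $H_\mF^{(m)}(\{x\})\to A_\mF$, fix $m$ with $h(H_\mF^{(m)}(\{x\}),A_\mF)<\delta$. By continuity of $H_\mF^{(m)}$, the sets $M_k:=H_\mF^{(m)}(\{x_k\})$ are then $\delta$-close to $A_\mF$ for large $k$.

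This reduces everything to \emph{Lyapunov stability of the attractor from below}: we need that if $h(M,A_\mF)<\delta$, then $A_\mF\subset N_\varepsilon\bigl(H_\mF^{(j)}(M)\bigr)$ for all $j$. I would attack this by a second contradiction argument. Suppose $M_k\to A_\mF$ and $j_k\to\infty$ are such that $A_\mF\not\subset N_\varepsilon\bigl(H_\mF^{(j_k)}(M_k)\bigr)$.

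First, I would use the compact set $D:=A_\mF\cup\bigcup_k M_k$ together with a finite $\varepsilon/4$-net of $A_\mF$. The aim is to exploit the fact that attraction of the single compact set $D$, together with the invariance $H_\mF^{(j)}(A_\mF)=A_\mF$, forces the points of $A_\mF$ to be reached from every point near $A_\mF$ after a bounded number of steps.

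Second, I would try to obtain uniformity of $n\mapsto H_\mF^{(n)}(\{y\})\to A_\mF$ over $y$ in a compact set by a Dini/Baire-type argument. The sets
$$E_N:=\bigl\{y\in D:\ A_\mF\subset \overline{N}_{\varepsilon/2}\bigl(H_\mF^{(n)}(\{y\})\bigr)\text{ for all }n\ge N\bigr\}$$
are closed and cover $D$, so Baire's theorem gives one $E_N$ with nonempty interior. I would then try to propagate this from that open piece to all of $D$, using the fact that every point of $D$ eventually maps, under some composition, into a neighbourhood of $A_\mF$.

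This stability step is the one I expect to be genuinely hard. It is presumably where the abstract ``weakly Picard in a strong sense'' framework of Theorem~\ref{thm:cont-eqq} does its work.
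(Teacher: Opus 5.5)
\paragraph{A genuine gap: the lower half is not proved.} Your preliminary reductions are correct: the bounded-$n_k$ case, the upper half via monotonicity and the compact set $C=K_0\cup\bigcup_k L_k$, and the reduction of the lower half to single-point orbits $H_\mF^{(n_k)}(\{x_k\})\subset H_\mF^{(n_k)}(L_k)$. But the lower half is where all the difficulty lies, and you leave it unproved, so this is not yet a proof. The Baire sketch would also fail as you set it up. First, $D=A_\mF\cup\bigcup_k M_k$ is not forward invariant, so interior of some $E_N$ in $D$ says nothing about the points of $H_\mF^{(n)}(\{y\})$, which leave $D$. Second, Baire may hand you a useless open piece disjoint from $A_\mF$: for a finite IFS the $M_k$ are finite, and their points off $A_\mF$ are isolated in $D$, and other orbits need not revisit them. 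Third, the remark that ``every point eventually lands near $A_\mF$'' is circular. It helps only if points near $A_\mF$ are already known to have orbits that become uniformly dense in $A_\mF$, which is what you are trying to prove.

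\paragraph{The missing idea.} Lemmas~\ref{le:X-eqq} and \ref{le:V} run Baire \emph{inside} $A:=A_\mF$, where $H_\mF^{(n)}(\{a\})\subset A$ and $H_\mF^{(n)}(\{a\})\to A$, and only afterwards leave $A$.

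Fix $\eta>0$. The closed sets $A_k:=\{a\in A: h(H_\mF^{(n)}(\{a\}),A)\le\eta\ \text{for all}\ n\ge k\}$ cover $A$, so some $A_{k_0}$ contains a nonempty relatively open $U$. The open sets $\{a\in A: H_\mF^{(n)}(\{a\})\cap U\neq\emptyset\}$ cover $A$, so finitely many of them do, say for $n=n_1,\dots,n_j$. Any $u\in H_\mF^{(n_i)}(\{a\})\cap U\subset A_{k_0}$ satisfies $H_\mF^{(n-n_i)}(\{u\})\subset H_\mF^{(n)}(\{a\})\subset A$. Hence $h(H_\mF^{(n)}(\{a\}),A)\le\eta$ for all $a\in A$ and $n\ge n_0:=\max_i n_i+k_0$.

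Next, continuity of the \emph{single} map $H_\mF^{(n_0)}$ at points of $A$ gives an open $V\supset A$ with $h(H_\mF^{(n_0)}(\{y\}),A)<2\eta$ on $V$. Choose $n'$ with $H_\mF^{(m)}(C)\subset V$ for $m\ge n'$; this is possible because $H_\mF^{(m)}(C)\to A$ and $A$ is compact. Lemma~\ref{lem:hhelpp} gives $H_\mF^{(n)}(\{x\})=\bigcup_{y\in H_\mF^{(n-n_0)}(\{x\})}H_\mF^{(n_0)}(\{y\})$, and hence $h(H_\mF^{(n)}(\{x\}),A)\le2\eta$ for all $x\in C$ and $n\ge n'+n_0$.

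This closes your lower half at once. Since $n_0$ is applied last, $V$ need not be invariant, and no stability ``for all $j$'' is needed. The paper finishes via Lemmas~\ref{lem:wPic1}, \ref{lem:sing} and \ref{lem:newkey} rather than by contradiction. The weakly Picard framework adds nothing here, because $H^*$ is constant for an IFS with an attractor.

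\paragraph{Continuity of $H_\mF$ needs an assumption.} Continuity of $H_\mF$ is not automatic for the paper's possibly infinite IFSs. The IFS $\{x/2,(x+1)/2\}\cup\{\min(1,nx):n\in\N\}$ on $[0,1]$ has attractor $[0,1]$. Yet $H_\mF(\{0\})=\{0,\frac12\}$, while $1\in H_\mF(\{t\})$ for every $t>0$. So you need $\mF$ finite, or conditions (i)--(ii) of the Remark. The paper also tacitly assumes this when it invokes Theorem~\ref{thm:cont-eqq}.
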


A natural question arises if Theorem \ref{thm:main} is nontrivial. 
In the literature we found several weaker versions of Theorem \ref{thm:main}, restricted to compact spaces. In \cite{Sarizadeh2}, using different methods, the author proved the assertion for the case when $X$ itself is an attractor of $\F$ (see also \cite{Sarizadeh}) and in \cite{MR2251715} for IFSs on compact spaces which are \emph{stable} or, in different notation, topologically contractive; equivalently---weakly contractive under some remetrization (see~\cite{Banakh}). Our Lemma \ref{le:X-eqq} can be considered as a slightly stronger result than the \cite{Sarizadeh2}.

First we show that there are maps with contracting fixed points which are not pointwise equicontinuous.

\begin{example}\label{example:com}
\emph{Let $S$ be the unit circle in the 
complex plane $\mathbb{C}$, and let $f:S\to S$ be defined by
$$
f(e^{2\pi i t}) := e^{2\pi i\sqrt{t}}
$$
for $t\in[0,1]$. Then $1$ is a contractive fixed point of $f$ but $f$ is not uniformly (pointwise, equivalently) equicontinuous as 
$$\big|e^{2\pi i\big(\frac{1}{2}\big)^{2^n}}-1\big|\to 0,$$
and
$$
\big|f^{(n)}\big(e^{2\pi i\big(\frac{1}{2}\big)^{2^n}}\big)-f^{(n)}(1)\big|=|-1-1|=2.
$$
}
\end{example}
If $\mF$ consists of nonexpansive maps, then automatically its Hutchinson operator $H_\mF$ is nonexpansive, so by Example \ref{ex:111}(2), it is uniformly equicontinuous. 
In fact, the following result seems to be a folklore,
but we give a short proof for the convenience of the reader.

\begin{proposition}
Assume that $\mF=\{f_i:i\in I\}$ is an IFS so that the family 
$\mF^{\infty}$ 
is pointwise equicontinuous [uniformly equicontinuous]. 
Then its Hutchinson attractor $H_\mF$ is pointwise equicontinuous [uniformly equicontinuous, respectively].
\end{proposition}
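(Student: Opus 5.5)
The plan is to first identify the iterates of $H_\mF$ with Hutchinson operators of compositions. For $n\in\N$ and $K\in\K(X)$ I expect
$$H_\mF^{(n)}(K)=\overline{\bigcup_{i_1,\ldots,i_n\in I} f_{i_1}\circ\cdots\circ f_{i_n}(K)}.$$
I would prove this by induction on $n$. The inductive step rests on one fact: for a continuous $f$ and any $A\subset X$ we have $f(\overline{A})\subset\overline{f(A)}$, hence $\overline{\bigcup_i f_i(\overline{A})}=\overline{\bigcup_i f_i(A)}$. Write $\mathcal{G}_n$ for the family of all $n$-fold compositions; then $\mathcal{G}_n\subset\mF^\infty$.

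Next comes a basic Hausdorff distance estimate. Let $\{g_j:j\in J\}$ be a family of maps, and let $K,L$ be compact sets with $h(K,L)<\delta$ such that $d(g_j(x),g_j(y))<\varepsilon$ for all $j$ whenever $x\in K$, $y\in L$ and $d(x,y)<\delta$. Then
$$h\Big(\overline{\textstyle\bigcup_j g_j(K)},\overline{\textstyle\bigcup_j g_j(L)}\Big)\le\varepsilon.$$
To see this, take $x\in K$ and pick $y\in L$ with $d(x,y)<\delta$; then $g_j(x)$ lies within $\varepsilon$ of $g_j(L)$. The same holds symmetrically, and passing to closures does not change the Hausdorff distance. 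Since this gives $\le\varepsilon$ rather than $<\varepsilon$, I would start from $\varepsilon/2$.

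The uniform case is then immediate. Given $\varepsilon$, take $\delta$ from the uniform equicontinuity of $\mF^\infty$ for $\varepsilon/2$. Applying the estimate with $\{g_j\}=\mathcal{G}_n$ gives $h(H_\mF^{(n)}(K),H_\mF^{(n)}(L))\le\varepsilon/2<\varepsilon$ whenever $h(K,L)<\delta$, for every $n$.

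The pointwise case is the main obstacle, because the point of equicontinuity is now a compact set $K$ rather than a point of $X$. I first need uniform control on $K$ itself. Pointwise equicontinuity at each point of a compact set upgrades to a uniform statement by the usual Lebesgue-number argument: for every $\varepsilon>0$ there is $\delta>0$ such that for all $x\in K$, all $y\in X$ with $d(x,y)<\delta$, and all $g\in\mF^\infty$, we have $d(g(x),g(y))<\varepsilon$.

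I would prove this by cover. For each $x\in K$, take $\delta_x$ for $\varepsilon/2$ at $x$. Cover $K$ by the balls $B(x,\delta_x/2)$ and extract a finite subcover with centres $x_1,\ldots,x_m$. Set $\delta=\min_k\delta_{x_k}/2$. If $x\in B(x_k,\delta_{x_k}/2)$ and $d(x,y)<\delta$, then both $x$ and $y$ lie in $B(x_k,\delta_{x_k})$, so the triangle inequality through $g(x_k)$ gives $d(g(x),g(y))<\varepsilon$.

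With this in hand, for $L$ satisfying $h(K,L)<\delta$ I would apply the Hausdorff estimate, using $\varepsilon/2$ in place of $\varepsilon$. Each $x\in K$ has some $y\in L$ within $\delta$, and each $y\in L$ has some $x\in K$ within $\delta$. In both directions the point of $K$ is the one at which the uniform-on-$K$ bound applies. This yields $h(H_\mF^{(n)}(K),H_\mF^{(n)}(L))<\varepsilon$ for all $n$, which is pointwise equicontinuity of $H_\mF$ at $K$.
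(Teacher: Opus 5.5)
Your proof is correct and takes essentially the paper's route: the same finite cover of $K$ by balls $B(x_k,\delta_{x_k}/2)$, the triangle inequality through $g(x_k)$, and a two-sided Hausdorff estimate applied to all compositions in $\mF^\infty$. Your version is slightly more careful than the paper's: you justify describing $H_\mF^{(n)}(K)$ as the closure of $\bigcup g(K)$ over $n$-fold compositions $g$ (the paper writes each point of $H^{(n)}_\mF(K)$ as $f_{i_1}\circ\cdots\circ f_{i_m}(x')$ and ignores the closure), you take $\delta=\min_k\delta_{x_k}/2$ explicitly as the argument requires, and you also treat the uniform case.
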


\begin{proof}
We will only prove the case of pointwise equicontinuity, which is a bit more technically complicated. Assume that $\mF^\infty$ is pointwise equicontinuous. 
Take any $K\in \K(X)$ and $\varepsilon>0$. 
Then for every $x\in K$, there is $\delta_x>0$ 
such that for every $f\in\mF^\infty$, if $d(y,x)<\delta_x$, then $d(f(x),f(y))<\frac{\varepsilon}{2}$. 
Since $K$ is compact, we can find $x_1,\ldots,x_k\in K$ so that 
$$K\subset\bigcup_{i=1}^k B\big(x_i,\frac{\delta_{x_i}}{2}\big).$$
Let $\delta:=\min\{\delta_1,\ldots,\delta_k\}$, take any $n\in\N$ and let $D\in\K(X)$ be such that $h(K,D)<\delta$. 
If $x\in H^{(n)}_{\F}(K)$, then $x=f_{i_1}\circ\cdots\circ f_{i_m}(x')$ for some $x'\in K$ and $i_1,\ldots,i_m\in I$. Since $h(K,D)<\delta$, we can find $y'\in D$ with $d(x',y')<\delta$. Now since $x'\in K$, there is $i=1,\ldots,k$ so that $x'\in B(x_i,\frac{\delta_{x_i}}{2})$, and thus $x',y'\in B(x_i,\delta_{x_i})$. In particular, setting $f:=f_{i_1}\circ\cdots\circ f_{i_m}$, we have
$$
\inf\{d(x,y)\colon y\in H^{(n)}_\mF(D)\}\leq d(f(x'),f(y'))\leq d(f(x'),f(x_i))+d(f(x_i),f(y'))<\frac{\varepsilon}{2}+\frac{\varepsilon}{2}=\varepsilon,
$$
and hence
$$
\sup\{\inf\{d(x,y)\colon y\in H^{(n)}_\mF(D)\}\colon x\in H^{(n)}_\mF(K)\}\leq\varepsilon.
$$
In the same way we show that
$$
\sup\{\inf\{d(x,y)\colon x\in H^{(n)}_\mF(K)\}\colon y\in H^{(n)}_\mF(D)\}\leq\varepsilon,
$$
which gives $h(H^{(n)}(K),H^{(n)}(D))\leq \varepsilon$.
\end{proof}

Note that equicontinuity of each element of $\mF$ does not imply equicontinuity of $\mF^\infty$:

\begin{example}\label{ex:eq}\emph{
   Let $f_1,f_2\colon [0,1]\to[0,1]$ be defined by
 $$f_1(x):=\frac{1}{2}x+\frac{1}{4}, \;\;\;\;
 f_2(x):=\left\{\begin{array}{ll}
        \frac{3}{4} & \textup{for\ }x\leq\frac{5}{8}; \\
        4x-\frac{7}{4} & \textup{for\ } \frac{5}{8}<x<\frac{11}{16}; \\
        1 & \textup{for\ }x\geq\frac{11}{16}.
    \end{array}\right.$$
Then
    $$(f_1\circ f_2)(x)=\left\{\begin{array}{ll}
        \frac{5}{8} & \textup{for\ }x\leq\frac{5}{8}; \\
        2x-\frac{5}{8} & \textup{for\ } \frac{5}{8}<x<\frac{11}{16}; \\
        \frac{3}{4} & \textup{for\ }x\geq\frac{11}{16}.
    \end{array}\right.$$
$f_1,f_2$ are uniformly (equivalently, pointwise) equicontinuous ($f_2^{(2)}(x)=1$) 
but $f_1\circ f_2$ is not uniformly (equivalently, pointwise) equicontinouous 
(this follows easily from Theorem \ref{thm:bc}), which implies that $\mF^\infty$ is not uniformly (equivalently, pointwise) equicontinuous.
}
\end{example}

Observe also that equicontinuity of $\mF^\infty$ is not a necessary
nor a sufficient condition for the existence of an attractor:

\begin{example}
\emph{
Consider the IFS $\mF=\{f_1,f_2,f_3\}$ from Example \ref{example:1}. 
The maps $f_1,f_2,f_3$ are not uniformly equicontinuous, 
and in particular, $\mF^\infty$ is not uniformly equicontinuous. 
(Recall again that uniform equicontinuity of $f$ on a compact space is equivalent to the existence of an admissible metric making $f$ nonexpansive)
}
\end{example}

\begin{example}
\emph{
Let
$$
X=\{-1\}\cup \big\{x_n\colon n\in\mathbb{Z}\}\cup\{1\},
$$
where $(x_n)_{n\in{\mathbb{Z}}}$ satisfies $\lim_{n\to-\infty}x_n=-1$ and $\lim_{n\to\infty}x_n=1$ and $x_n<x_{n+1}$ for $n\in\mathbb{Z}$,
and consider $X$ as a subspace of $\mathbb{R}$. Then $X$ is compact.\\
Define $f_1,f_2,f_3\colon X\to X$ by
$$
f_1(x)=\left\{\begin{array}{ccc}
x&\mbox{if}&x\in\{-1,1\};\\
x_{n+1}&\mbox{if}&x=x_n,
\end{array}\right.
$$
$$
f_2(x)=\left\{\begin{array}{ccc}
-1&\mbox{if}&x\in\{-1,x_0,1\};\\
x_{n+1}&\mbox{if}&x=x_n\;\mbox{for}\;n<0;\\
x_{-n}&\mbox{if}&x=x_n\;\mbox{for}\;n>0,
\end{array}\right.
$$
$$
f_3(x)=x_0\;\;\mbox{for}\;\;x\in X.
$$
Then $f_1,f_2,f_3$ are continuous and $X$ is the attractor of $\F=\{f_1,f_2,f_3\}$. 
This follows from $H_{\F}^{(n)}(\{x_0\})\to X$, which is a consequence of the fact that for $n\in\N$, 
$$
\{x_{k}\colon x_{-n+1},\ldots,x_{-1},x_0,x_1,\ldots,x_n\}\subset H_{\F}^{(n)}(\{x_0\}).
$$
Observe that the family 
$$\{f_{i_1}\circ\cdots\circ f_{i_n}\colon n\in\N,\;i_1,\ldots,i_n=1,2\}$$
is not pointwise (equivalently uniformly) equicontinuous. 
Indeed, we have
$$
|x_{-n}-(-1)|\to0
$$
and
$$
|f_{i_1}\circ\cdots\circ f_{i_n}(x_{-n})-f_{i_1}\circ\cdots\circ f_{i_n}(-1)|=|x_0-(-1)|.
$$
}
\end{example}

\begin{example}
\emph{
Let $\F=\{\textup{Id}\}$, where  $\textup{Id}\colon[0,1]\to[0,1]$ is the identity function. Then $\F^\infty$ is equicontinuous but $\F$ does not
possess an attractor.
}
\end{example}

%%%%%%%%%%%%%%%%%%%%%%%%%%%%%%%%%%%%%%%%%%%%%%%%%%%%%%%%%%%%%%%%%%%%

\section{Main results}

Throughout this section we assume that $(X,d)$ is a metric space.

%%%%%%%%%%%%%%%%%%%%%%%%%%%%%%%%%%%%%%%%

\subsection{Weakly Picard operators}

%Below we recall a bit weaker notion of contracting fixed point (cf.~Def.~\ref{def:CFP}):
\begin{definition}\label{def:WPO}
We say that a continuous selfmap $f\colon X\to X$ is \emph{weakly Picard}, 
if for every $x\in X$, the limit $\lim_{n\to\infty}f^{(n)}(x)$ exists. 
Each weakly Picard operator $f$ generates the map $f^*\colon X\to X$ by
$$
f^*(x):=\lim_{n\to\infty}f^{(n)}(x).
$$
\end{definition}

Observe that each $f^*(x)$ for $x\in X$ is a fixed point of $f$. Indeed, by continuity of $f$, we have
$$
f(f^*(x))=f(\lim_{n\to\infty}f^{(n)}(x))=\lim_{n\to\infty}f^{(n+1)}(x)=f^*(x).
$$
The next lemma shows that for weakly Picard operators, equicontinuity implies continuity of $f^*$. We say that $f\colon X\to X$ is \emph{pointwise equicontinuous} [\emph{uniformly equicontinuous}] \emph{on a set $Y\subset X$}, if the family of restrictions of iterations 
$\{f^{(n)}\colon n\in\N\}$
is pointwise equicontinuous [uniformly equicontinuous, respectively]
on a set $Y$.

\begin{lemma}\label{lem:eqcont1}
If $f\colon X\to X$ is weakly Picard and is pointwise equicontinuous [uniformly equicontinuous] on a set $Y\subset X$, 
then the restriction $f^*_{\restriction Y}:Y\to X$ is continuous [uniformly continuous, respectively].
\end{lemma}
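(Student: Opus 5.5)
The plan is a direct $\varepsilon/3$-type argument: $f^*$ is the pointwise limit of the iterates $f^{(n)}$, and equicontinuity lets the closeness estimate pass to that limit. Uniform continuity follows by the same argument, with $\delta$ chosen independently of the point.

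\emph{Pointwise case.} Fix $x\in Y$ and $\varepsilon>0$. By pointwise equicontinuity of $\{f^{(n)}\colon n\in\N\}$ on $Y$, choose $\delta>0$ such that for all $n\in\N$ and all $y\in Y$ with $d(x,y)<\delta$ we have $d(f^{(n)}(x),f^{(n)}(y))<\varepsilon$. Now fix such a $y$. Since $f$ is weakly Picard, $f^{(n)}(x)\to f^*(x)$ and $f^{(n)}(y)\to f^*(y)$. Continuity of the metric $d\colon X\times X\to[0,\infty)$ then gives
$$
d(f^*(x),f^*(y))=\lim_{n\to\infty}d(f^{(n)}(x),f^{(n)}(y))\leq\varepsilon .
$$
Equivalently, pick $n$ large enough that both $d(f^{(n)}(x),f^*(x))$ and $d(f^{(n)}(y),f^*(y))$ are below $\varepsilon$, and apply the triangle inequality to get the bound $3\varepsilon$. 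Either way the conclusion is: for every $\varepsilon>0$ there is $\delta>0$ with $d(f^*(x),f^*(y))\leq\varepsilon$ whenever $y\in Y$ and $d(x,y)<\delta$. This is continuity of $f^*_{\restriction Y}$ at $x$. The non-strict inequality is harmless, since we may replace $\varepsilon$ by $\varepsilon/2$.

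\emph{Uniform case.} Here $\delta$ comes from uniform equicontinuity on $Y$, so it works simultaneously for all pairs $x,y\in Y$ with $d(x,y)<\delta$. The same limiting argument then yields $d(f^*(x),f^*(y))\leq\varepsilon$ for all such pairs, which is uniform continuity.

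There is no real obstacle. The only point needing care is that $\delta$ must be chosen before passing to the limit and must not depend on $n$; this is exactly what equicontinuity supplies.
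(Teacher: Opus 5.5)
Your proof is correct and is essentially the paper's argument: choose $\delta$ from equicontinuity independently of $n$, then pass to the limit in $d(f^{(n)}(x),f^{(n)}(y))<\varepsilon$ to get $d(f^*(x),f^*(y))\leq\varepsilon$. The uniform case goes through identically, and you rightly fix $x\in Y$ (the paper's text writes $x\in X$, a harmless slip).
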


\begin{proof}
Assume that $f$ is pointwise equicontinuous on $Y$ and take any $x\in X$ and $\varepsilon>0$. 
Then choose $\delta>0$ such that for every $n\in\N$ and $y\in Y$, if $d(x,y)<\delta$, then we have $d(f^{(n)}(x),f^{(n)}(y))<\varepsilon$. Taking the limit in this inequality, we arrive to $d(f^*(x),f^*(y))\leq \varepsilon$. 
This gives continuity of $f^*_{\restriction Y}$.
\end{proof}

The next lemma characterizes uniform equicontinuity for a wide class of maps.

\begin{lemma}\label{lem:wPic1}
Assume that $f$ is weakly Picard and $Y\subset X$ is nonempty and compact. % and such that $f^*$ is continuous.
Then $f$ is uniformly (equivalently, pointwise) equicontinuous on $Y$ if and only if the restriction $f^*_{\restriction Y}$ is uniformly continuous and the following holds: 
\begin{equation}\label{eq:eqq:eksperyment}
\forall_{\varepsilon>0}\;\exists_{n_0\in\N}\;\forall_{n\geq n_0}\;\forall_{x\in Y}\;d(f^{(n)}(x),f^*(x))<\varepsilon.
\end{equation}
\end{lemma}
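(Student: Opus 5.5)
We prove the two implications separately. Since $Y$ is compact, Lemma~\ref{le:eqcompact} makes uniform and pointwise equicontinuity on $Y$ equivalent, so we may work with uniform equicontinuity throughout.

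\emph{($\Rightarrow$)} Assume $\{f^{(n)}\}$ is uniformly equicontinuous on $Y$.

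First, Lemma~\ref{lem:eqcont1} gives that $f^*_{\restriction Y}$ is uniformly continuous.

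Next we prove (\ref{eq:eqq:eksperyment}). Fix $\varepsilon>0$. Take $\delta>0$ from uniform equicontinuity for $\varepsilon/3$, and shrink it if needed so that $d(x,y)<\delta$ also gives $d(f^*(x),f^*(y))<\varepsilon/3$ for $x,y\in Y$. By compactness, cover $Y$ by finitely many balls $B(y_j,\delta)$ with $y_j\in Y$, $j=1,\dots,k$. Since $f^{(n)}(y_j)\to f^*(y_j)$ for each $j$, there is $n_0$ with $d(f^{(n)}(y_j),f^*(y_j))<\varepsilon/3$ for all $n\ge n_0$ and all $j$. For any $x\in Y$, pick $j$ with $d(x,y_j)<\delta$. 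The triangle inequality then gives
$$d(f^{(n)}(x),f^*(x))\le d(f^{(n)}(x),f^{(n)}(y_j))+d(f^{(n)}(y_j),f^*(y_j))+d(f^*(y_j),f^*(x))<\varepsilon$$
for all $n\ge n_0$.

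\emph{($\Leftarrow$)} Assume $f^*_{\restriction Y}$ is uniformly continuous and (\ref{eq:eqq:eksperyment}) holds. Fix $\varepsilon>0$.

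1. Choose $n_0$ from (\ref{eq:eqq:eksperyment}) for $\varepsilon/3$.
2. Choose $\delta_0$ so that $x,y\in Y$ with $d(x,y)<\delta_0$ satisfy $d(f^*(x),f^*(y))<\varepsilon/3$.
3. For $n\ge n_0$ and such $x,y$, the triangle inequality through $f^*(x)$ and $f^*(y)$ gives $d(f^{(n)}(x),f^{(n)}(y))<\varepsilon$.
4. The finitely many maps $f,f^{(2)},\dots,f^{(n_0-1)}$ are continuous, so each is uniformly continuous on the compact set $Y$. This gives $\delta_1,\dots,\delta_{n_0-1}$ that work for these iterates with the same $\varepsilon$.
5. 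Take $\delta=\min\{\delta_0,\delta_1,\dots,\delta_{n_0-1}\}$; this $\delta$ works for every $n\in\N$.

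No step is a real obstacle. The only point needing care is in the forward direction: the $\varepsilon/3$-net must be chosen to control the iterates $f^{(n)}$ and the limit map $f^*$ at the same time, which is exactly why we first obtain uniform continuity of $f^*_{\restriction Y}$ from Lemma~\ref{lem:eqcont1}.
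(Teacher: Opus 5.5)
Your proof is correct. The ($\Leftarrow$) direction is the same as the paper's. Your $\varepsilon/3$ choice for the modulus of $f^*$ is the right bookkeeping; the paper's choice $d(f^*(x),f^*(y))<\varepsilon$ is a slip.

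For ($\Rightarrow$) you take a different route. The paper runs a Dini-type cover argument in three steps. First it shows that the sets $B_m=\{x\in Y:\sup_{n\ge m}d(f^{(n)}(x),f^*(x))<\delta\}$ are open in $Y$; this is where equicontinuity and continuity of $f^*$ enter. Then it extracts a finite subcover to get a single $n_0$. Finally it passes to all $n\ge n_0$ through $f^{(n)}=f^{(n-n_0)}\circ f^{(n_0)}$ and $f^{(n-n_0)}(f^*(x))=f^*(x)$. That last step applies the equicontinuity estimate to the pair $f^{(n_0)}(x),f^*(x)$. These points lie in $Y$ only if $Y$ is forward invariant. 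That holds in the paper's only use of this direction, Lemma~\ref{le:V}, where $X=Y=\K(A)$. The step can also simply be dropped: $x\in B_{m_i}$ already bounds $d(f^{(n)}(x),f^*(x))$ for every $n\ge m_i$, and one may assume $\delta\le\varepsilon$.

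Your argument is the standard fact that equicontinuity plus pointwise convergence on a compact set gives uniform convergence. You use a finite $\delta$-net and a three-term triangle inequality. It never uses that the maps are iterates or that $f^*(x)$ is a fixed point. Every application of equicontinuity involves two points of $Y$, and there is no openness to check. So it proves the implication for an arbitrary compact $Y$, as stated, and is the shorter of the two. The paper's version mirrors the cover arguments it uses later (Lemma~\ref{le:X-eqq}), but as written it needs the small repair above to reach the stated generality.
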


\begin{proof}
Assume first that $f$ is uniformly equicontinuous on $Y$. 
By Lemma \ref{lem:eqcont1}, $f^*_{\restriction Y}$ is uniformly continuous. 
Fix any $\varepsilon$ and choose $\delta>0$ as in (\ref{eq:ueqY}).  
For any $m\in\N$, define
$$
B_m=\{x\in Y\colon \sup\{d(f^{(n)}(x),f^*(x)):n\geq m\}<\delta\}
$$ 
Then each $B_m$ is open in $Y$ (because of equicontinuity of $f$ and continuity of $f^*$ on $Y$) and $Y=\bigcup_{m\in\N}B_m$.
Since $Y$ is compact, we can find $m_1,\ldots,m_k$ so that $Y=\bigcup_{i=1}^kB_{m_i}$. Let $n_0:=\max\{m_1,\ldots,m_k\}$ and choose any $n\geq n_0$ and $x\in Y$. 
Then $d(f^{(n_0)}(x),f^*(x))<\delta$ and hence 
$$d(f^{(n)}(x),f^*(x))=d(f^{(n-n_0)}(f^{(n_0)}(x)),f^{(n-n_0)}(f^*(x)))<\varepsilon.$$
This gives (\ref{eq:eqq:eksperyment}).

Now assume that $f^*_{\restriction Y}$ is uniformly continuous 
and~(\ref{eq:eqq:eksperyment}) holds, choose any $\varepsilon>0$ 
and let $n_0$ be as in (\ref{eq:eqq:eksperyment}) 
chosen for $\frac{\varepsilon}{3}$. 
Since $f^{(1)},\ldots,f^{(n_0-1)}$ are uniformly continuous on $Y$ (as $Y$ is compact) and $f^*_{\restriction Y}$ is uniformly continuous, we can find $\delta>0$ 
such that $d(f^*(x),f^*(y))<\varepsilon$ and $d(f^{(k)}(x),f^{(k)}(y))<\varepsilon$ for $x,y\in Y$ 
with $d(x,y)<\delta$ and all $k=1,\ldots,n_0-1$. 
%Moreover, by uniform continuity of $f^*$, we may assume that $d(f^*(x),f^*(y))<\frac{\varepsilon}{3}$
%for $x,y$ with $d(x,y)<\delta$.
Now choose $x,y\in Y$ with $d(x,y)<\delta$ and take any $n\in\N$. 
If $n\leq n_0-1$, then $d(f^{(n)}(x),f^{(n)}(y))<\varepsilon$ 
by the choice of $\delta$. If $n\geq n_0$, then 
$$d(f^{(n)}(x),f^{(n)}(y))\leq d(f^{(n)}(x),f^*(x))+d(f^*(x),f^*(y))+d(f^*(y),f^{(n)}(y))<\varepsilon.$$ 
This ends the proof.
\end{proof}

%%%%%%%%%%%%%%%%%%%%%%%%%%%%%%%%%%%%%%%%%%%%%%%%%%%%%%%

\subsection{Hutchinson operators generated by multifunctions}
\begin{definition}
If $F:X\to\K(X)$ is continuous multifunction, then the map $H_F:\K(X)\to \K(X)$ defined by
\begin{equation}\label{eq:genH}
H_F(K):=\bigcup_{x\in K}F(x)
\end{equation}
will be called \emph{the Hutchinson operator generated by $F$}.\\
We say that $H:\K(X)\to\K(X)$ is \emph{a Hutchinson operator} if it is the Hutchinson operator generated by some continuous multifunction $F:X\to\K(X)$. 
\end{definition}

\begin{remark}
By \cite[Proposition 1 and Theorem 1]{MR3446001}, a Hutchinson operator is well defined and is a continuous map.
\end{remark}
\begin{remark}
Hutchinson operators of a wide class of IFSs are of the form (\ref{eq:genH}). 
To see it, assume that $\mF=\{f_i:i\in I\}$ is an IFS satisfying the following properties:
\begin{itemize}
\item[(i)] for every $x\in X$, the set $\overline{\{f_i(x):x\in X\}}$ is compact;
\item[(ii)] the family $\{f_i:i\in I\}$ is pointwise equicontinuous.
\end{itemize}
Then its Hutchinson operator $H_\mF$ is of the form
$$
\forall_{K\in\K(X)}\;H_\mF(K)=\bigcup_{x\in K}F(x)
$$ 
for
$$F(x):=\overline{\{f_i(x):i\in I\}}.$$
Indeed, the map $F:X\to\K(X)$ is well-defined (by (i)) and continuous (by (ii)), and we then have
$$
H_\mF(K)=\overline{\bigcup_{i\in I}f_i(K)}=\overline{\bigcup_{i\in I}\bigcup_{x\in K}\{f_i(x)\}}=
\overline{\bigcup_{x\in K}\{f_i(x):i\in I\}}=$$ $$=\overline{\bigcup_{x\in K}\overline{\{f_i(x):i\in I\}}}=
\bigcup_{x\in K}\overline{\{f_i(x):i\in I\}}=\bigcup_{x\in K}F(x).
$$
\end{remark}

\begin{lemma}\label{lem:hhelpp}
If $H$ is a Hutchinson operator, then for every $K\in\K(X)$ and $n\in\N$,
\begin{equation}\label{eq:help}
H^{(n)}(K)=\bigcup_{x\in K}\; H^{(n)}(x),
\end{equation}
where $H^{(n)}(x):=H^{(n)}(\{x\})$.
\end{lemma}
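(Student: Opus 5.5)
The plan is induction on $n$, using only the definition (\ref{eq:genH}): $H(K)=\bigcup_{x\in K}F(x)$, where $F\colon X\to\K(X)$ is continuous. As in the Remark, $H$ is a well-defined map $\K(X)\to\K(X)$. Note first that for a singleton we have $H(\{x\})=F(x)$, so $H^{(1)}(x)=F(x)$. The case $n=1$ of (\ref{eq:help}) is then exactly the definition of $H_F$.

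For the inductive step, assume (\ref{eq:help}) holds for $n$ and every $K\in\K(X)$. Fix $K\in\K(X)$ and put $L:=H^{(n)}(K)\in\K(X)$. We compute
$$
H^{(n+1)}(K)=H(L)=\bigcup_{y\in L}F(y)=\bigcup_{x\in K}\;\bigcup_{y\in H^{(n)}(x)}F(y),
$$
where the last equality uses $L=\bigcup_{x\in K}H^{(n)}(x)$ from the inductive hypothesis, regrouping the union by the point $x\in K$ from which $y$ comes. For each fixed $x$, the set $H^{(n)}(x)$ lies in $\K(X)$ because $H$ maps $\K(X)$ into itself. So by the definition of $H$ applied to the compact set $H^{(n)}(x)$,
$$
\bigcup_{y\in H^{(n)}(x)}F(y)=H\big(H^{(n)}(x)\big)=H^{(n+1)}(x).
$$
Substituting this gives $H^{(n+1)}(K)=\bigcup_{x\in K}H^{(n+1)}(x)$, which completes the induction.

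There is no real obstacle. The only point needing care is that no closures appear: for a Hutchinson operator generated by a multifunction, the union $\bigcup_{x\in K}F(x)$ is already compact (this is part of the cited well-definedness result), so every union above is exactly $H$ of some compact set. The regrouping of unions is then a purely set-theoretic identity. If one preferred to avoid relying on the regrouping, one could instead check both inclusions pointwise. A point $z\in H^{(n+1)}(K)$ lies in $F(y)$ for some $y\in H^{(n)}(K)$, hence $y\in H^{(n)}(x)$ for some $x\in K$, so $z\in H^{(n+1)}(x)$. The reverse inclusion follows from the monotonicity of $H$, since $H^{(n)}(x)\subset H^{(n)}(K)$ for every $x\in K$.
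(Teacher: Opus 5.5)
Your proof is correct and is essentially the paper's argument: an induction on $n$ that only uses the defining formula $H(K)=\bigcup_{x\in K}F(x)$ and regrouping of unions. The one cosmetic difference is that you split $H^{(n+1)}=H\circ H^{(n)}$ and apply the inductive hypothesis to $K$ itself, while the paper splits $H^{(n+1)}=H^{(n)}\circ H$ and applies it to the compact sets $H(K)$ and $F(x)$, showing that both sides equal $\bigcup_{x\in K}\bigcup_{y\in F(x)}H^{(n)}(y)$.
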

\begin{proof}
For $n=1$ there is nothing to prove. Assume that (\ref{eq:help}) holds for some $n\in\N$.
On the one hand
$$
H^{(n+1)}(K)=H^{(n)}(H(K))=\bigcup_{y\in H(K)}H^{(n)}(y)=\bigcup_{x\in K}\bigcup_{y\in F(x)}H^{(n)}(y),
$$
and on the other
$$
\bigcup_{x\in K}H^{(n+1)}(x)=\bigcup_{x\in K}H^{(n)}(H(x))=\bigcup_{x\in K}\bigcup_{y\in H(x)}H^{(n)}(y)=\bigcup_{x\in K}\bigcup_{y\in F(x)}H^{(n)}(y),
$$
and we get (\ref{eq:help}) for $n+1$.
\end{proof}

The above lemma implies that uniform equicontinuity of $H$ can be reduced to uniform equicontinuity on the subspace of singletons. 
Below we identify subset $Y\subset X$ with $\{\{x\}:x\in Y\}\subset\K(Y)$.

\begin{lemma}\label{lem:sing}
Let $Y\subset X$.
A Hutchinson operator $H:\K(X)\to \K(X)$ is uniformly equicontinuous on $\K(Y)$ if and only if $H$ is uniformly equicontinuous on $Y$, i.e.,
\begin{equation}\label{eq:sing}
\forall_{\varepsilon>0}\;\exists_{\delta>0}\;\forall_{n\in\N}\;\forall_{x,y\in Y}\;d(x,y)<\delta\;\Rightarrow\;h(H^{(n)}(x),H^{(n)}(y))<\varepsilon.
\end{equation}
\end{lemma}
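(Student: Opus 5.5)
The forward implication is immediate. Singletons $\{x\},\{y\}$ with $x,y\in Y$ belong to $\K(Y)$, and $h(\{x\},\{y\})=d(x,y)$. So condition (\ref{eq:qq}) for the family $\{H^{(n)}\}$ restricted to $\K(Y)$, applied to singletons, is exactly (\ref{eq:sing}).

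For the converse, I will assume (\ref{eq:sing}) and fix $\varepsilon>0$. Take $\delta>0$ as in (\ref{eq:sing}) chosen for $\varepsilon/2$; I claim the same $\delta$ works for all of $\K(Y)$. Let $K,D\in\K(Y)$ with $h(K,D)<\delta$, and fix $n\in\N$. By Lemma~\ref{lem:hhelpp},
$$
H^{(n)}(K)=\bigcup_{x\in K}H^{(n)}(x),\qquad H^{(n)}(D)=\bigcup_{y\in D}H^{(n)}(y).
$$
Take any $z\in H^{(n)}(K)$, so $z\in H^{(n)}(x)$ for some $x\in K$. Since $h(K,D)<\delta$, there is $y\in D$ with $d(x,y)<\delta$. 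Both points lie in $Y$, so $h(H^{(n)}(x),H^{(n)}(y))<\varepsilon/2$. Hence there is $w\in H^{(n)}(y)\subset H^{(n)}(D)$ with $d(z,w)<\varepsilon/2$.

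This shows that $\sup_{z\in H^{(n)}(K)}\operatorname{dist}(z,H^{(n)}(D))\le\varepsilon/2$. Exchanging the roles of $K$ and $D$ gives the same bound for the other half of the Hausdorff distance. Therefore $h(H^{(n)}(K),H^{(n)}(D))\le\varepsilon/2<\varepsilon$, with $\delta$ independent of $n$, $K$ and $D$. This is uniform equicontinuity of $H$ on $\K(Y)$.

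There is no real obstacle here. The only points needing care are the following. First, Lemma~\ref{lem:hhelpp} gives the union without closure; this is fine, since $H^{(n)}(K)$ is compact by definition of a Hutchinson operator. Second, the supremum yields only a non-strict inequality, which is why I start from $\varepsilon/2$. Third, the points $x,y$ produced must lie in $Y$; this holds because $K,D\subset Y$.
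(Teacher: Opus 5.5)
Your proposal is correct and follows the paper's proof: both decompose $H^{(n)}(K)$ into the sets $H^{(n)}(x)$, $x\in K$, via Lemma~\ref{lem:hhelpp} and bound each half of the Hausdorff distance separately using (\ref{eq:sing}). Your choice of $\delta$ for $\varepsilon/2$ is a small improvement, since it yields the strict inequality that the definition requires, whereas the paper only concludes $h(H^{(n)}(K),H^{(n)}(D))\leq\varepsilon$.
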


\begin{proof}
Clearly we only have to prove implication ''$\Leftarrow$''.

Assume (\ref{eq:sing}) holds and take any $\varepsilon>0$. Then choose $\delta>0$ as in (\ref{eq:sing}) and fix any $n\in\N$ and $K,D\in \K(Y)$  with $h(K,D)<\delta$. 
Take $x\in H^{(n)}(K)$. By Lemma \ref{lem:hhelpp}, $x\in H^{(n)}(x')$ for some $x'\in K$. 
Consequently we can find $y'\in D$ with $d(x',y')<\delta$ and 
$$
\inf\{d(x,y)\colon y\in H^{(n)}(D)\}\leq \inf\{d(x,y)\colon y\in H^{(n)}(y')\}\leq h(H^{(n)}(x'),H^{(n)}(y'))<\varepsilon.
$$
In consequence, we obtain
$$
\sup\{\inf\{d(x,y):y\in H^{(n)}(D)\}\colon x\in H^{(n)}(K)\}\leq\varepsilon.
$$
In the same way we show that 
$$
\sup\{\inf\{d(x,y):y\in H^{(n)}(K)\}:x\in H^{(n)}(D)\}\leq\varepsilon,
$$
which gives $h(H^{(n)}(K),H^{(n)}(D))\leq\varepsilon.$
\end{proof}
%\begin{lemma}\label{le:eqq-cond}
%Let $X$ be a compact metric space and $H:\K(X)\to\K(X)$ be weakly Picard. Then $H$ is uniformly equicontinuous if and only if $H^*$ is continuous and 
%\begin{equation}\label{eq:final:eksperyment}
%\forall_{\varepsilon>0}\;\exists_{n_0\in\N}\;\forall_{n\geq n_0}\;\forall_{x\in X}\;h(H^{(n)}(x),H^*(x))<\varepsilon.
%\end{equation}
%\end{lemma}
%\begin{proof}
%Since for every $K\in\K(X)$, we have
%$$
%h(H^{(n)}(K),H^*(K))=h\big(\overline{\bigcup_{x\in K}H^{(n)}(x)},\overline{\bigcup_{x\in K}H^*(x)}\big)\leq\sup\{h(H^{(n)}(x),H^*(x)):x\in K\},
%$$
%condition (\ref{eq:final:eksperyment}) implies (\ref{eq:eqq:eksperyment}) and the result follows from Lemma \ref{lem:wPic1}.
%\end{proof}
\subsection{Inner attracting weakly Picard Hutchinson operators are pointwise equicontinuous}

Finally we can state and prove the main result of the paper. We need a definition.

\begin{definition}
Let $H:\K(X)\to\K(X)$ be a weakly Picard Hutchinson operator. We say that $H$ is \emph{inner attracting}, if for every $K\in\K(X)$, we have that $H^*(K)=H^*(D)$ for every nonempty compact $D\subset H^*(K)$.
\end{definition}

Our main result is the following.

\begin{theorem}\label{thm:cont-eqq}
Assume that $H\colon\K(X)\to\K(X)$ is an inner attracting  Hutchinson operator.
Then the following conditions are equivalent:
\begin{enumerate}
\item $H^*:\K(X)\to\K(X)$ is continuous;\label{thm:cont-eqq:1}
\item $H$ is pointwise equicontinuous.\label{thm:cont-eqq:2}
\end{enumerate}
\end{theorem}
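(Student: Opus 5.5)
For (\ref{thm:cont-eqq:2})$\Rightarrow$(\ref{thm:cont-eqq:1}) I would simply apply Lemma~\ref{lem:eqcont1} to the weakly Picard map $H$ on the space $\K(X)$ with $Y=\K(X)$: pointwise equicontinuity of $H$ gives continuity of $H^*$. All the work is in (\ref{thm:cont-eqq:1})$\Rightarrow$(\ref{thm:cont-eqq:2}).

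For that direction I would reduce in three steps.
\begin{enumerate}
\item By Lemma~\ref{lem:newkey} applied to $\{H^{(n)}\}$ on $\K(X)$, it suffices to prove uniform equicontinuity of $H$ on every compact family $\mathcal C\subset\K(X)$.
\item For such $\mathcal C$ the union $Y:=\bigcup\mathcal C$ is compact in $X$ (a standard hyperspace fact), and $\mathcal C\subset\K(Y)$. By Lemma~\ref{lem:sing} it is then enough to prove (\ref{eq:sing}) on $Y$, i.e. uniform equicontinuity on the singletons of $Y$.
\item The set $\{\{x\}:x\in Y\}$ is compact in $\K(X)$ and $H^*$ is uniformly continuous on it. So by Lemma~\ref{lem:wPic1} it remains to verify the uniform convergence condition (\ref{eq:eqq:eksperyment}):
$$\sup_{x\in Y}h(H^{(n)}(x),H^*(x))\to0 .$$
\end{enumerate}

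To prove this uniform convergence I would use a local-to-global compactness argument. It suffices to show that every $x_0\in Y$ has a neighbourhood $U$ and an $n_0$ with $h(H^{(n)}(y),H^*(y))<\varepsilon$ for all $y\in U\cap Y$ and $n\ge n_0$; finitely many such $U$ then cover $Y$. The structural facts I would exploit are the following.
\begin{itemize}
\item Lemma~\ref{lem:hhelpp} gives $H^{(n)}(y)=\bigcup_{w\in H^{(m)}(y)}H^{(n-m)}(w)$.
\item $H^*(y)$ is a fixed point of $H$, so $H^{(k)}(z)\subset H^*(y)$ for every $z\in H^*(y)$.
\item Inner attraction gives $H^*(z)=H^*(y)$ for all $z\in H^*(y)$, and more generally $H^*(D)=H^*(y)$ for every compact $D\subset H^*(y)$.
\end{itemize}

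Concretely, for $y$ near $x_0$ I would first fix $m$ so that $H^{(m)}(x_0)$ is close to $A:=H^*(x_0)$. By continuity of $H^{(m)}$, $H^{(m)}(y)$ is then close to $A$ as well, and by continuity of $H^*$ also $H^*(y)$ is close to $A$. The lower (covering) half of the Hausdorff estimate should follow from monotonicity of $H$. Each $H^{(n)}(y)$ contains $H^{(n-m)}$ of points lying near arbitrary points of $A$. Inner attraction says that finitely many points $z_1,\dots,z_r\in A$ forming an $\eta$-net already satisfy $H^{(k)}(\{z_1,\dots,z_r\})\to A$. Continuity of the finitely many maps $H^{(k)}$, $k\le k_0$, then transfers this to nearby points.

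The upper half (showing that $H^{(n)}(y)$ does not spread far outside $H^*(y)$ uniformly in $n$) is the step I expect to be the main obstacle, since naively it is exactly equicontinuity at $A$. My first attempt would use continuity of $H^*$ on the compact family $\{H^{(k)}(w)\}$ together with the inclusion $H^{(n)}(w)\subset H^{(n)}(H^*(w)\cup\ldots)$. Failing that, I would argue by contradiction: take sequences $y_j\to x_0$ and $n_j\to\infty$ with points of $H^{(n_j)}(y_j)$ at distance at least $\varepsilon$ from $A$. Extracting convergent preimage chains in the compact set $Y$ and its compact forward images, I would aim to produce a compact set $D$ with $H^*(D)\neq A$ but $D\subset A$, or else a discontinuity of $H^*$, which would be the desired contradiction.
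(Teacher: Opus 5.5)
Your reductions match the paper's: (2)$\Rightarrow$(1) by Lemma~\ref{lem:eqcont1}, and for (1)$\Rightarrow$(2), Lemmas~\ref{lem:newkey}, \ref{lem:sing}, \ref{lem:wPic1} reduce everything to $\sup_{x\in Y}h(H^{(n)}(x),H^*(x))\to0$ for compact $Y\subset X$. That uniform convergence is the whole content of the theorem, and your proposal does not prove it. In your covering half, continuity of $H^{(k)}$ for $k\le k_0$ controls only finitely many iterates, but you need every $n\ge n_0$. Nothing in your argument carries the covering property from time $k$ to later times, since $H$ need not be nonexpansive. A finite-subcover argument also cannot give a common threshold, because the sets $\{z\in A:\ h(H^{(n)}(z),A)\le\varepsilon \text{ for all } n\ge k\}$ are closed, not open. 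The other half you leave as an acknowledged obstacle with only tentative strategies.

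Two ideas are missing. First, your `upper half' is trivial. $H$ is monotone, so $H^*(y)\subset H^*(Y)$ for $y\in Y$. For any $z\in H^*(y)$, inner attraction applied to $\{y\}$ and to $Y$ gives $H^*(y)=H^*(z)=H^*(Y)=:A$. The same argument with $K\cup K'$ shows $H^*$ is constant on $\K(X)$. Hence $H^{(n)}(y)\subset H^{(n)}(Y)\to A$ uniformly in $y$. The paper uses this identity tacitly when it estimates $h(H^{(n)}(x),H^*(Y))$ instead of $h(H^{(n)}(x),H^*(x))$.

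Second, uniformity in $n$ needs a Baire category argument, which the paper supplies in Lemma~\ref{le:X-eqq} for the restriction of $H$ to $\K(A)$, where $A$ is a contracting fixed point:
\begin{enumerate}
\item The closed sets $X_k=\{z\in A:\ h(H^{(n)}(z),A)\le\varepsilon \text{ for all } n\ge k\}$ cover $A$, so one of them has nonempty interior $U$ in $A$.
\item By compactness, every $z\in A$ meets $U$ within boundedly many steps, and monotonicity then controls all later iterates.
\item This yields a single $n_0$ with $h(H^{(n_0)}(z),A)<\varepsilon/2$ for all $z\in A$. Continuity of the one map $H^{(n_0)}$ extends this to an open $V\supset A$ (Lemma~\ref{le:V}).
\end{enumerate}
Finally, the paper splits $n=(n-n_0)+n_0$ in the opposite order from yours: $H^{(n)}(x)=\bigcup_{w\in H^{(n-n_0)}(x)}H^{(n_0)}(w)$. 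The variable part only needs $H^{(n-n_0)}(x)\subset H^{(n-n_0)}(Y)\subset V$. This holds for all large $n$ because $H^{(m)}(Y)\to A$ is a single convergent sequence. The fixed map $H^{(n_0)}$ then gives both halves of the Hausdorff estimate at once.

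Without a Baire-type step, or some substitute for it, your local-to-global scheme does not close.
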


Before we give a proof, let us observe that Theorem \ref{thm:cont-eqq} implies Theorem \ref{thm:main}. 
Indeed, if an IFS $\F$ admits an attractor $A_\F$, then the map $H_\F^*$ is a constant (hence continuous and inner attracting) map. 
Thus $H_\F$ is pointwise equicontinuous.
%{\color{blue}Moreover, let us observe that the assumption on inner attractivity is essential.
%\begin{example}
 %   \emph{
%Let $S$ be the unit circle in the complex plane $\mathbb{C}$, let $f_1,f_2:S\to S$ be defined by
%$$
%f_1\big(e^{2\pi i t}\big):=e^{2\pi i \sqrt{t}}\;\;\;f_2(z):=z
%$$
%and consider the IFS $\F=\{f_1,f_2\}$.
%Then the Hutchinson operator $H_\F$ is weakly Picard, non inner attracting, and $H_\F$ is not pointwise equicontinuous.\\
%To see that $H_\F$ is weakly Picard, observe that for every $K\in\K(S)$, the sequence $(H_\F^{(n)}(K))$ is an increasing sequence of compact sets, so
%$$
%H_\F^{(n)}(K)\to \overline{\bigcup_{k\in\N}H_\F^{(n)}(K)}
%$$
%$H_\F$ is not inner attracting as
%$$
%H_\F^{(n)}(S)\to S
%\;\;\;\;\;\mbox{
%and}\;\;\;\;\;
%H^{(n)}_\F(\{1\})=\{1\}\to\{1\}
%$$
%Finally, $H_\F$ is not equicontinuous as (cf. Example \ref{example:com})
%$$
%H^{(n)}_\F(\{1\})=\{1\}
%$$
%and
%$$
%-1\in H^{(n)}_\F\big(\big\{e^{2\pi i \big(\frac{1}%{2}\big)^{2^n}}\big\}\big)
%$$
%    }
%\end{example}}

The following result is a particular version of Theorem \ref{thm:cont-eqq}. 
Since we use it later in the proof of the general case, 
we state it here as a lemma. 

\begin{lemma}\label{le:X-eqq}
Let $X$ be a compact metric space and $H:\K(X)\to\K(X)$ be a Hutchinson operator.
If $X$ is a contracting fixed point of $H$, then $H$ is uniformly equicontinuous.
\end{lemma}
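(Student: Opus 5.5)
By Lemma~\ref{lem:sing} it would suffice to treat singletons, but the cleanest route is to apply Lemma~\ref{lem:wPic1} directly with the compact set $Y:=\K(X)$ (compact because $X$ is). By hypothesis $H$ is weakly Picard with $H^*\equiv X$ constant, hence uniformly continuous. So everything reduces to the uniform convergence condition~(\ref{eq:eqq:eksperyment}):
\[
\forall_{\varepsilon>0}\;\exists_{n_0}\;\forall_{n\ge n_0}\;\forall_{K\in\K(X)}\;\; h(H^{(n)}(K),X)<\varepsilon .
\]
Since $H^{(n)}(K)\subset X$ always, $h(H^{(n)}(K),X)\le\varepsilon$ just says that $H^{(n)}(K)$ is $\varepsilon$-dense in $X$. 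This property is monotone under inclusion, and $H^{(n)}$ preserves inclusions by Lemma~\ref{lem:hhelpp}. Hence it is enough to prove the condition for singletons $K=\{x\}$, with $\le\varepsilon$ in place of $<\varepsilon$, which is harmless after replacing $\varepsilon$ by $\varepsilon/2$.

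\emph{Step 1 (a Baire argument).} Fix $\varepsilon>0$ and put
\[
C_m:=\{x\in X:\ H^{(n)}(x)\ \text{is } \varepsilon\text{-dense in } X \text{ for all } n\ge m\}.
\]
For fixed $n$, the set $\{x: h(H^{(n)}(x),X)\le\varepsilon\}$ is closed, because $x\mapsto H^{(n)}(\{x\})$ is continuous (by the Remark, $H$ is continuous). Therefore each $C_m$ is closed. Moreover $X=\bigcup_m C_m$, since $H^{(n)}(x)\to X$ for each $x$. As $X$ is compact, the Baire category theorem gives some $m$ and a nonempty open set $U\subset C_m$.

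\emph{Step 2 (compactness to reach $U$ uniformly).} For each $x$, convergence $H^{(k)}(x)\to X$ yields some $k_x$ with $H^{(k_x)}(x)\cap U\neq\emptyset$. The set $\{y: H^{(k_x)}(y)\cap U\ne\emptyset\}$ is open, by continuity of $y\mapsto H^{(k_x)}(\{y\})$ in the Hausdorff metric and openness of $U$. Covering $X$ by finitely many such sets gives a bound $N$ such that every $x$ has some $k\le N$ and a point $u\in H^{(k)}(x)\cap U$.

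\emph{Step 3 (conclusion).} By Lemma~\ref{lem:hhelpp}, for every $n\ge m$ we get $H^{(k+n)}(x)\supset H^{(n)}(u)$, which is $\varepsilon$-dense because $u\in C_m$. Hence $H^{(j)}(x)$ is $\varepsilon$-dense for all $j\ge N+m$, uniformly in $x$. By monotonicity the same holds for every $K\in\K(X)$, which gives~(\ref{eq:eqq:eksperyment}). Lemma~\ref{lem:wPic1} then yields uniform equicontinuity.

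The main obstacle is Step 1: extracting a uniform time from merely pointwise convergence, without any equicontinuity available a priori. The choice of closed sets combined with Baire, followed by the inclusion monotonicity of $H$, is what is meant to circumvent the circularity that appears in the proof of Lemma~\ref{lem:wPic1}.
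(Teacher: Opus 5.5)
Your proof is correct and follows the paper's argument essentially step for step: the same Baire category argument on the closed sets $C_m$ (the paper's $X_k$), the same finite open cover by the sets where $H^{(k)}(x)$ meets $U$, and the same concatenation via Lemma~\ref{lem:hhelpp}. The only difference is the initial reduction. You apply Lemma~\ref{lem:wPic1} on the compact space $\K(X)$ and pass from singletons to arbitrary $K$ by inclusion-monotonicity, whereas the paper applies it on singletons and lifts to $\K(X)$ with Lemma~\ref{lem:sing}; both are valid here because $H^*\equiv X$.
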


\begin{proof}
By Lemmas \ref{lem:wPic1} and \ref{lem:sing}, it is enough to show that
\begin{equation}\label{eq:fi:eksperyment}
\forall_{\varepsilon>0}\;\exists_{n_0\in\N}\;\forall_{n\geq n_0}\;\forall_{x\in X}\;h(H^{(n)}(x),X)\leq \varepsilon.
\end{equation}
Indeed, by Lemma \ref{lem:wPic1}, this condition implies uniform equicontinuity of $H$ on $X$ (here we again identify $X$ with $\{\{x\}\colon x\in X\}$), 
which gives its uniform equicontinuity on the whole $\K(X)$ 
by~Lemma \ref{lem:sing}.

Fix any $\varepsilon>0$. For any $k\in\N$, define
$$
X_{k}=\{x\in X:\forall_{n\geq k}\;h(H^{(n)}(x),X)\leq \varepsilon\}.
$$
%where $$\overline{A}(\varepsilon)=\bigcup_{x\in A}\overline{B}(x,\delta)=\{x\in X:\;\exists_{y\in A}\;d(x,y)\leq{\varepsilon}\}.$$
Each set $X_{k}$ is closed as its complement
$$
\{x\in X:\exists_{n\geq k}\;h(H^{(n)}(x),X)>\varepsilon\}
$$
is open by continuity of $H^{(n)}$.\\
% Indeed, assume that $(x_n)\subset X_{k}$ and $x_n\to x$, and take $m\geq k$. By continuity of $H^{(m)}$, we have $h(H^{(m)}(x_n),H^{(m)}(x))\to 0$. By continuity of a metric, we have
%$$
%h(H^{(m)}(x),X)=\lim_{n\to\infty}h(H^{(m)}(x_n),X)\leq\varepsilon,
%$$
%and hence $x\in X_m$.
Now since $X$ is a contracting fixed point of $H$, we have that $X=\bigcup_{m\in\N}X_m$. 
Since $X$ is complete, by the Baire Category theorem, 
one of sets $X_{k_0}$ is not nowhere dense in $X$. 
Since $X_{k_0}$ is closed, there is an open set $\emptyset \neq U\subset X_{k_0}$. Now for any $n\in\N$, let
$$
B_n:=\{x\in X\colon H^{(n)}(x)\cap U\neq\emptyset\}.
$$
Again by the fact that $X$ is a contracting fixed point of $H$, we have that $X=\bigcup_{n\in\N} B_n$, 
and since each $H^{(n)}$ is continuous, we also have that each $B_n$ is open. 
Since $X$ is compact, we can find 
$n_1,\ldots,n_j$ 
such that $X=\bigcup_{i=1}^j B_{n_i}$.

Fix $n_0:=\max\{n_1,\ldots,n_j\}+k_0$ and choose any $n\geq n_0$ and $x\in X$. 
Then $x\in B_{n_i}$ for some $i\in\{1,\ldots,j\}$, 
meaning that $H^{(n_i)}(x)\cap U\neq\emptyset$. 
Take $x'\in H^{(n_i)}(x)\cap U$. In particular, $x'\in X_{k_0}$. 
Since $n-n_i\geq k_0$, we have 
$h(H^{(n-n_i)}(x'),X)\leq\varepsilon$. 
Hence, by the fact that
$$
H^{(n-n_i)}(x')\subset H^{(n-n_i)}(H^{(n_i)}(x))=H^{(n)}(x)\subset X,
$$
we have $h(H^{(n)}(x),X)\leq\varepsilon$ and (\ref{eq:fi:eksperyment}) holds.
\end{proof}

%$A\in\K(X)$ is its \emph{inner attracting fixed point}, if $A\subset X_H$ and $H^*(x)=A$ for every $x\in A$.\\
%We say that $H$ is \emph{proper}, if $X\subset X_H$ and $H^*(x)$ is inner attracting for every $x\in X$.
\begin{lemma}\label{le:V}
Let $H:\K(X)\to\K(X)$ be a weakly Picard, inner attracting Hutchinson operator.
Then for every $K\in \K(X)$ and every $\varepsilon>0$,
there exists $n_0\in\N$ and an open set $V\supset H^*(K)$ with
\begin{equation}
h(H^{(n_0)}(y),H^*(K))<\varepsilon\textup{\ for each\ }y\in V.
\end{equation}
\end{lemma}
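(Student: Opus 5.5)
The plan is to reduce to Lemma \ref{le:X-eqq} applied on the compact space $A:=H^*(K)$, and then to pass from $A$ to an open neighbourhood by continuity of a single iterate $H^{(n_0)}$.

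\textbf{Step 1: $H$ restricts to a Hutchinson operator on $\K(A)$.} Let $F\colon X\to\K(X)$ be the continuous multifunction generating $H$. Since $A=H^*(K)$ is a fixed point of $H$ (as remarked after Definition \ref{def:WPO}), we have $\bigcup_{x\in A}F(x)=H(A)=A$. Hence $F(x)\subset A$ for every $x\in A$, and the restriction $F_{\restriction A}\colon A\to\K(A)$ is a continuous multifunction on the compact metric space $A$. Its Hutchinson operator $H_A$ coincides with $H$ on $\K(A)$. In particular $H^{(n)}(D)\subset A$ for every $D\in\K(A)$ and every $n$.

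\textbf{Step 2: $A$ is a contracting fixed point of $H_A$.} Take any $D\in\K(A)$. Since $D$ is a nonempty compact subset of $H^*(K)$, inner attractivity gives $H^*(D)=H^*(K)=A$. That is, $H_A^{(n)}(D)\to A$ in the Hausdorff metric. So $A$, viewed as the whole space, is a contracting fixed point of $H_A$. The hypotheses of Lemma \ref{le:X-eqq} are therefore satisfied with $A$ in place of $X$.

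\textbf{Step 3: a uniform iterate.} Here I use not just the conclusion of Lemma \ref{le:X-eqq} but the intermediate statement (\ref{eq:fi:eksperyment}) established in its proof. Applied with $\varepsilon/2$, it yields $n_0\in\N$ such that $h(H^{(n_0)}(x),A)\le\varepsilon/2<\varepsilon$ for all $x\in A$. The essential point is that $n_0$ is the same for every $x\in A$; this uniformity is exactly what the Baire-category and compactness argument in that proof supplies. Alternatively, one can obtain it by combining the uniform equicontinuity given by Lemma \ref{le:X-eqq} with Lemmas \ref{lem:sing} and \ref{lem:wPic1}, applied with $H_A^*$ constant equal to $A$.

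\textbf{Step 4: the open set.} The map $X\ni y\mapsto H^{(n_0)}(\{y\})\in\K(X)$ is continuous, being the composition of the continuous embedding $y\mapsto\{y\}$ with the continuous map $H^{(n_0)}$. Define
$$V:=\{y\in X\colon h(H^{(n_0)}(y),A)<\varepsilon\}.$$
This set is open in $X$, and by Step 3 it contains $A$. This gives the claim.

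The only delicate point is Step 3, namely obtaining a single $n_0$ that works uniformly on $A$. A pointwise $n_0$ would not produce one open set $V\supset A$. That uniformity is precisely what Lemma \ref{le:X-eqq} (through (\ref{eq:fi:eksperyment})) provides, so the remaining work is to check Steps 1 and 2 carefully.
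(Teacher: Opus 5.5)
Your proposal is correct and follows the paper's proof step for step: restrict $H$ to $\K(A)$ with $A=H^*(K)$, use inner attractivity to make $A$ a contracting fixed point of $H_A$, obtain a uniform $n_0$ from Lemma \ref{le:X-eqq} (the paper does this via Lemma \ref{lem:wPic1}, which is your stated alternative), and then enlarge $A$ to an open $V$ using continuity of $H^{(n_0)}$. Your choice $V=\{y\colon h(H^{(n_0)}(y),A)<\varepsilon\}$, taken directly as a preimage, is a slightly cleaner version of the paper's union of balls $B(x,r_x)$ combined with the triangle inequality.
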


\begin{proof}
Choose $K\in\K(X)$ and set $A:=H^*(K)$.
Let $H_{A}$ be the restriction of $H$ to the subspace $\K(A)$. Then $H_A$ is the Hutchinson operator generated by the (necessarily continuous) multivalued map $F_{\restriction A}:A\to\K(A)$. Indeed, since $H(A)=A$, we have that $F(x)\subset A$ for any $x\in A$, and for $D\in\K(A)$,
$$
H_A(D)=H(D)=\bigcup_{x\in D}F(x)=\bigcup_{x\in D}F_{\restriction A}(x).
$$
Moreover, $A$ is a contracting fixed point of $H_A$ since for every $D\in\K(A)$, we have
$$
h(H_A^{(n)}(D),A)=h(H^{(n)}(D),A)\to 0,
$$
as $H$ is inner attracting.
Now we can use Lemma \ref{le:X-eqq} and get that $H_A$ is uniformly equicontinuous, and then Lemma  \ref{lem:wPic1} for $H_A$ (and $X=Y=\K(A)$) to get
\begin{equation}\label{eq:fi2:eksperyment}
\forall_{\varepsilon>0}\;\exists_{n_0\in\N}\;\forall_{x\in A}\;h(H^{(n_0)}(x),A)< \frac{\varepsilon}{2}.
\end{equation}
Take any $\varepsilon>0$ and fix $n_0$ as in (\ref{eq:fi2:eksperyment}). 
Since $H^{(n_0)}$ is continuous, we have
\begin{equation}\label{eq:fi3}
\forall_{x\in A}\;\exists_{r_x>0}\;\forall_{y\in B(x,r_x)}\;h(H^{(n_0)}(y),H^{(n_0)}(x))<\frac{\varepsilon}{2}.
\end{equation}
Let $V:=\bigcup_{x\in A}B(x,r_x)$ be an open subset of $X$ containing $A$. Take any $y\in V$. Then $y\in B(x,r_x)$ for some $x\in A$ and hence
\begin{equation}\label{eq:fi4}
h(H^{(n_0)}(y),A)\leq 
h(H^{(n_0)}(y),H^{(n_0)}(x))+h(H^{(n_0)}(x),A)<
\frac{\varepsilon}{2}+\frac{\varepsilon}{2}=\varepsilon.
\end{equation}
%\begin{equation}\label{eq:fi3:eksperyment}
%\forall_{x\in A}\;\forall_{y\in B(x,r)}\;h(H^{(n)}(y),H^{(n)}(x))<\frac{\varepsilon}{2}.
%\end{equation}
%Set $V_n:=\bigcup_{x\in A}B(x,r)$ and take any $y\in V_n$. Then $y\in B(x,r)$ for some $x\in A$ and thus 
%\begin{equation}\label{eq:fi4:eksperyment}
%h(H^{(n)}(y),A)\leq 
%h(H^{(n)}(y),H^{(n)}(x))+h(H^{(n)}(x),A)<\frac{\varepsilon}{2}+\frac{\varepsilon}{2}=\varepsilon.
%\end{equation}
\end{proof}

\begin{proof}[Proof of Theorem~\ref{thm:cont-eqq}]
The implication ``(\ref{thm:cont-eqq:2})$\Rightarrow$(\ref{thm:cont-eqq:1})''
follows from Lemma \ref{lem:eqcont1}. We will prove the opposite one.

Assume that $H^*:\K(X)\to \K(X)$ is continuous. 
It is enough to show that $H$ is uniformly equicontinuous on each compact set $Y\subset X$ . 
Indeed,  by Lemma \ref{lem:sing}, $H$ is then uniformly continuous on each $\K(Y)$ for compact $Y\subset X$.
Now if $E\subset \K(X)$ is compact, then $Y:=\bigcup\{K:K\in E\}$ is compact and $E\subset \K(Y)$. Hence uniform equicontinuity on $\K(Y)$ implies uniform equicontinuity on $E$, so by Lemma \ref{lem:newkey}, pointwise equicontinuity of $H$.

%assume that it is the case. Then which implies pointwise equicontinuity of $H$:
%$$
%\forall_{x_0\in X}\;\forall_{r>0}\;\forall_{\varepsilon>0}\;\exists_{\delta>0}\;\forall_{n\in\N}\;\forall_{K,D\in\K( B(x_0,r))}\;h(K,D)<\delta\;\Rightarrow\;h(H^{(n)}(K),H^{(n)}(D))<\varepsilon
%$$ 
%which in particular gives
%\begin{equation}\label{eq:final11}
%\forall_{K_0\in\K(X)}\;\forall_{\varepsilon>0}\;\exists_{\delta>0}\;\forall_{n\in\N}\;\forall_{D\in B(K_0,\delta)}\;h(H^{(n)}(K_0),H^{(n)}%(D))<\varepsilon
%\end{equation}
%Indeed, take $K_0\in\K(X)$ and take any $r>0$. Then $Y:=\overline{\bigcup_{x\in K_0}B(x,r)}$ is closed and bounded, hence compact. Now let $\varepsilon>0$. By uniform equicontinuity of $H$ on $\K(Y)$, we can find positive $\delta<r$ such that for any $n\in\N$ and $K,D\in B(K_0,\delta)\cap Y$ we have $h(H^{(n)}(K),H^{(n)}(D))<\varepsilon$. If $D\in B(K_0,\delta)$, then $K_0,D\in B(K_0,\delta)\cap Y$, so  $h(H^{(n)}(K_0),H^{(n)}(D))<\varepsilon$ for all $n\in\N$ and we get (\ref{eq:final11}).\\

So our aim is to prove that $H$ is uniformly equicontinuous on each compact set $Y\subset X$. 
By Lemma \ref{lem:wPic1} (with identifying $Y$ with $\{\{x\}\colon x\in Y\}$), we have to prove the following:
\begin{equation}\label{eq:final12}
\forall_{\varepsilon>0}\;\exists_{n_1}\;\forall_{n\geq n_1}\;\forall_{x\in Y}\;h(H^{(n)}(x),H^*(x))\leq \varepsilon.
\end{equation}
Take any $\varepsilon>0$ and let $n_0$ and $V$ be as in Lemma \ref{le:V}, chosen for $H^*(Y)$. Since $H^{(n)}(Y)\to H^*(Y)$, we can find $n'\in\N$ so that $H^{(n)}(Y)\subset V$ for every $n\geq n'$. Finally, let $n_1:=n_0+n'$ and take $n\geq n_1$ and $x\in Y$. Then for every $x\in Y$, we have $H^{(n-n_0)}(x)\subset V$ and thus
$$
h(H^{(n)}(x),H^*(Y))=h(H^{(n_0)}(H^{(n-n_0)}(x)),H^*(Y))\leq$$ $$\leq \sup\{h(H^{(n_0)}(y),H^*(Y)):y\in V\}\leq\varepsilon,
$$
which gives us (\ref{eq:final12}) and finishes the proof.
\end{proof}

%%%%%%%%%%%%%%%%%%%%%%%%%%%%%%%%%%%%%%%%%%%%%%%%%%%%%%%%%%%%%%%%%%%%%%%%%%%%%%%%
% Bibliography
%%%%%%%%%%%%%%%%%%%%%%%%%%%%%%%%%%%%%%%%%%%%%%%%%%%%%%%%%%%%%%%%%%%%%%%%%%%%%%%%

\bibliographystyle{alpha}
\bibliography{equiIFS}

\end{document}